\newcounter{num}[section] %
\newenvironment{theo}
{\refstepcounter{num}%
\bigskip\noindent{\bf Theorem~\arabic{section}.\arabic{num}. }\it}
\newenvironment{cor}
{\refstepcounter{num}%
\bigskip\noindent{\bf Corollary~\arabic{section}.\arabic{num}. }\it}
\newenvironment{lemma}
{\refstepcounter{num}%
\bigskip\noindent{\bf Lemma~\arabic{section}.\arabic{num}. }\it}
\newenvironment{example}
{\refstepcounter{num}%
\bigskip\noindent{\bf Example~\arabic{section}.\arabic{num}.}}
\newenvironment{conj}
{\refstepcounter{num}%
\bigskip\noindent{\bf Conjecture~\arabic{section}.\arabic{num}. }\it}
\newenvironment{remark}
{\refstepcounter{num}%
\bigskip\noindent{\bf Remark~\arabic{section}.\arabic{num}.}}
\newcommand{\definition}[1]
{\refstepcounter{num}%
\bigskip\noindent{\bf Definition~\arabic{section}.\arabic{num}}~({\it #1}).}
\newcommand{\Ref}[1]{(\ref{#1})}
\newcounter{thepic}
\newenvironment{eq}{\begin{equation}}{\end{equation}}
\newcommand{\si}{\sigma}
\newcommand{\al}{\alpha}
\newcommand{\be}{\beta}
\newcommand{\ga}{\gamma}
\newcommand{\de}{\delta}
\newcommand{\LA}{\langle}
\newcommand{\RA}{\rangle}
\newcommand{\ov}[1]{\overline{#1}}
\newcommand{\un}[1]{{\underline{#1}} }
\newcommand{\id}[1]{{{\rm id}\{{#1}\}}}
\newcommand{\mdeg}{\mathop{\rm mdeg}}
\newcommand{\Char}{\mathop{\rm char}}
\newcommand{\X}{\LA X\RA}
\newcommand{\FF}{{\mathbb{F}}}   % base field
\newcommand{\LL}{{\mathbb{L}}}   % base field
\newcommand{\NN}{{\mathbb{N}}}
\newcommand{\ZZ}{{\mathbb{Z}}}   % integers
\begin{document}
\renewcommand{\refname}{References}
\thispagestyle{empty}

\title{Associative nil-algebras over finite fields.}%
\author{{Artem A. Lopatin and Ivan P. Shestakov}}%
\noindent
\address{\noindent{}Artem A. Lopatin 
\newline\hphantom{iiii} Omsk Branch of
\newline\hphantom{iiii} Federal State Budgetary Scientific Establishment 
\newline\hphantom{iiii} Sobolev Institute of Mathematics, 
\newline\hphantom{iiii} Siberian Branch of the Russian Academy of Sciences 
\newline\hphantom{iiii} (OB IM SBRAS). 
\newline\hphantom{iiii} Pevtsova street, 13,
\newline\hphantom{iiii} 644043, Omsk, Russia.
\newline\hphantom{iiii} {\it E-mail address:} artem\underline{ }lopatin@yahoo.com%
\newline{}
\newline\hphantom{iiii} Ivan P. Shestakov 
\newline\hphantom{iiii} Institute of Mathematics and Statistics,
\newline\hphantom{iiii} University of S\~ao Paulo,
\newline\hphantom{iiii} Caixa Postal 66281, 
\newline\hphantom{iiii} S\~ao Paulo-SP, Brazil, 05311-970.
\newline\hphantom{iiii} {\it E-mail address:} shestak@ime.usp.br%
}

\vspace{1cm}
\maketitle {\small
\begin{quote}
\noindent{\sc Abstract. }  
The nilpotency degree of a relatively free finitely generated associative algebra with the identity $x^n=0$ is studied over finite fields.
\medskip

\noindent{\bf Keywords: } Nil-algebras, nilpotency degree, Nagata--Higman Theorem, PI-algebras, associative algebras, finite fields.

\noindent{\bf 2010 MSC: } 16R10, 16R40, 16N40, 11T06.
\end{quote}
}

%=======================================================================================
%=======================================================================================
%---Sec1-------------------------------------------------------------------------------
\section{Introduction}\label{section_intro}

We denote by $\X_d\subset \X$ the semigroups (without unity) freely generated by {\it letters}  $x_1,\ldots,x_d$ and $x_1,x_2,\ldots$, respectively. Elements of $\X$ are called {\it words}.  Let $\FF\X_{d}$ and $\FF\X$ be the vector spaces over the field $\FF$ with the bases $\X_d$ and $\X$, respectively. Note that elements of $\FF\X_{d}$ and $\FF\X$ are finite linear combinations of words from $\X_d$ and $\X$, respectively. Denote by 
$$N_{n,d}=\frac{\FF\X_d}{\id{x^n\,|\,x\in\FF\X_d}}$$%
the relatively free finitely generated associative algebra with the identity $x^n=0$.  The connection between this algebra and analogues of the Burnside problems for associative algebras suggested by Kurosh and Levitzky is discussed in recent survey~\cite{Zelmanov07} by Zelmanov. The algebra $N_{n,d}$ also plays a crucial role in the construction of minimal systems of generators for algebras of polynomial invariants of several matrices  (see~\cite{DKZ02},~\cite{Lopatin_Comm2},~\cite{Lopatin_2222}, \cite{Lopatin_2222_II}, \cite{Lopatin_O3}). It is well-known that $N_{n,d}$ is a nilpotent algebra. For example, it follows from the Shirshov Height Theorem~\cite{Shirshov57} and the fact that 
\begin{eq}\label{eq_intro_1}
L_{1^n}(a_1,\ldots,a_n)=\sum_{\si\in S_n} a_{\si(1)}\cdots a_{\si(n)},
\end{eq}% 
the complete linearization of $x^n$, is equal to zero in $N_{n,d}$ (see below for the details). We write 
$$C_{n,d}=\min\{c>0\,|\,a_1\cdots a_c=0 \text{ for all }a_1,\ldots,a_c\in N_{n,d}\}$$ 
for the {\it nilpotency} degree of $N_{n,d}$. Since $C_{1,d}=1$ and $C_{n,1}=n$, we assume that $n,d\geq 2$ unless otherwise stated. To specify the field $\FF$, we write $N_{n,d}^{\FF}$, $C_{n,d}^{\FF}$ for  $N_{n,d}$, $C_{n,d}$, respectively. We say that an element $f$ of $\FF\X_d$ is a {\it relation} for $N_{n,d}$ and write $f=0$ in $N_{n,d}$ if the image of $f$ in $N_{n,d}$ is zero.

In characteristic zero case of the field $\FF$ we have $\frac{1}{2}n(n+1)\leq C_{n,d}\leq n^2$, where the lower bound was established by Kuzmin~\cite{Kuzmin75} and the upper bound was given by Razmyslov~\cite{Razmyslov74}. Kuzmin also conjectured that $C_{n,d}=\frac{1}{2}n(n+1)$, which was shown to be true for $n\leq4$ by Vaughan--Lee~\cite{Vaughan93}. The case of $n=5$ and $d=2$ was considered by Shestakov and Zhukavets~\cite{Shestakov04}. An English translation of Kuzmin's result can be found in books~\cite{Belov_book05} and~\cite{Drensky_book04}.

The difference of the case of positive characteristic of $\FF$ from the case of zero characteristic is due to the fact that in the latter case any relation for $N_{n,d}$ belongs to the ideal generated by $L_{1^n}$, which in general is not the case in positive characteristic. As the result, in case $0<\Char{\FF}\leq n$ we have $C_{n,d}\to\infty$ as $d\to\infty$ by~\cite{DKZ02}. Recently, Belov and Kharitonov~\cite{Belov11} established the that $C_{n,d}\leq 2^{18}\cdot n^{12\log_3(n)+28}d$. In the case of an infinite field $\FF$ with $\Char{\FF}>\frac{n}{2}$ Lopatin~\cite{Lopatin_Nnd} proved that $C_{n,d}<4\cdot 2^{\frac{n}{2}}d$. See Remark~4.8 of~\cite{Lopatin_Nnd} for the comparison of these two upper bounds. 

Given a field $\FF$ of characteristic $p\geq0$, the nilpotency degree $C_{n,d}$ is known for $n=2$ for an arbitrary $\FF$ (for example, see~\cite{DKZ02}) and for $n=3$ for an infinite $\FF$ (see~\cite{Lopatin_Comm1} and~\cite{Lopatin_Comm2}):
$$
C_{2,d}=\left\{
\begin{array}{rl}
3,&\text{if } p=0 \text{ or }p>2\\
d+1,&\text{if } p=2 \\
\end{array}
\right.
\;\text{ and }\;
C_{3,d}=\left\{
\begin{array}{rl}
6,&\text{if } p=0 \text{ or }p>3\\
6,&\text{if } p=2 \text{ and }d=2\\
d+3,&\text{if } p=2 \text{ and } d>2\\
3d+1,&\text{if }p=3.\\
\end{array}
\right..
$$
Moreover, in case $\FF$ is infinite Lopatin explicitly described an $\FF$-basis for $N_{3,d}$ (see~\cite{Lopatin_Comm2}) and calculated $C_{4,d}$ with deviation three for all $d$ under assumption that $p\neq2$ (see~\cite{Lopatin_Nnd}). 

In the case of an infinite field $\FF$ all partial linearizations of $x^n$ are relations for $N_{n,d}$, which in general does not hold in the case of a finite field. For an arbitrary field of characteristic $p$ Eick~\cite{Eick11} obtained the following results:
\begin{enumerate}
\item[$\bullet$] if $p>n$, then $C_{n,d}$ is the same as in the case of an infinite field of the same characteristic; 

\item[$\bullet$] $C_{3,d}$ is computed for $p=2,3$ and $d\leq4$;

\item[$\bullet$] $C_{4,2}$ is calculated for $p=2,3,5$ and $C_{5,2}$ is calculated for $p=0,2,3,5,7$.
\end{enumerate}

In this paper we extend the mentioned results as follows. Let $\FF$ be an arbitrary field and $\LL$ be an infinite field with $\Char{\FF}=\Char{\LL}$. Then 
\begin{enumerate}
\item[$\bullet$] in case $\#\FF\geq n$ we have $C_{n,d}^{\FF}=C_{n,d}^{\LL}$ (see Corollary~\ref{cor1});

\item[$\bullet$] we completed the description of $C_{3,d}^{\FF}$ for all $d$ by proving that $C_{3,d}^{\FF}=C_{3,d}^{\LL}$ (see Section~\ref{section_n3}). We also explicitly described an $\FF$-basis for $N_{3,d}^{\FF}$ (see Remark~\ref{remark_n3}). Note that 
$$
\dim{N_{3,d}^{\FF}}=\left\{
\begin{array}{rl}
\dim{N_{3,d}^{\LL}}+d(d-1)/2,&\text{if } \#\FF=2\\
\dim{N_{3,d}^{\LL}},&\text{otherwise }\\
\end{array}
\right.;$$ 

\item[$\bullet$] in case $\#\FF=n-1$ we have  $C_{n,d}^{\LL} \leq C_{n,d}^{\FF}\leq C_{n,d}^{\LL} + 1$ (see Theorem~\ref{theo_sec4}); 

\item[$\bullet$] as a consequence, $C_{4,d}$ is described with deviation four for all $d$ under assumption that $p\neq2$ (see Corollary~\ref{cor_n4}).
\end{enumerate}

The following conjecture holds for $n=2,3$ and all $d$, for $n=4,5$ and $d=2$.

%-----C1.1---------------------------------------------------------
\begin{conj}\label{conj1}
For a finite field $\FF$ and an infinite field $\LL$ with $\Char{\FF}=\Char{\LL}$ the equality $C_{n,d}^{\FF}=C_{n,d}^{\LL}$ holds.
\end{conj}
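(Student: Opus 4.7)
The plan is to establish the two inequalities $C_{n,d}^{\FF}\le C_{n,d}^{\LL}$ and $C_{n,d}^{\LL}\le C_{n,d}^{\FF}$, whose conjunction gives the conjecture.

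The inequality $C_{n,d}^{\LL}\le C_{n,d}^{\FF}$ follows from a universality argument. For any extension $\LL\supseteq\FF$ the ideal $I^{\FF}:=\id{f^n\,|\,f\in\FF\X_d}$ satisfies $I^{\FF}\otimes_{\FF}\LL\subseteq I^{\LL}$ inside $\LL\X_d\cong\FF\X_d\otimes_{\FF}\LL$, so there is a canonical surjective $\FF$-algebra homomorphism $N_{n,d}^{\FF}\otimes_{\FF}\LL\twoheadrightarrow N_{n,d}^{\LL}$ sending each $x_i$ to $x_i$. Since any word of length $c\ge C_{n,d}^{\FF}$ vanishes in $N_{n,d}^{\FF}$ and therefore also in its image $N_{n,d}^{\LL}$, we obtain $C_{n,d}^{\LL}\le C_{n,d}^{\FF}$.

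The principal challenge is the reverse inequality $C_{n,d}^{\FF}\le C_{n,d}^{\LL}$. Over an infinite field $\LL$ a Vandermonde argument places every partial linearization of $x^n$ inside $I^{\LL}$, but over a finite field $\FF$ with $\#\FF\le n-2$ this fails and the surjection above can have nontrivial kernel. My approach is induction on the deficit $k:=n-\#\FF$, with $k\le1$ supplied by Corollary~\ref{cor1} and Theorem~\ref{theo_sec4}. At each inductive step, for a word $w$ of length $c:=C_{n,d}^{\LL}$ one would carry out a proof that $w=0$ in $N_{n,d}^{\LL}$ (for instance via Shirshov's Height Theorem combined with iterated applications of~\Ref{eq_intro_1}) and replace every invocation of a missing partial linearization $L_{\la}$ by a substitution argument: pass to a finite extension $\FF'\supset\FF$ with $\#\FF'\ge n$, derive $L_{\la}=0$ by polarisation inside $N_{n,d}^{\FF'}$, and then descend to $\FF$ by taking Galois-invariant combinations of the derived relations.

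The main obstacle is that this descent introduces Frobenius-twisted corrections and it is far from clear a priori that the rephrased argument stays at length $\le c$. A complementary direction I would pursue in parallel is to analyse the graded components of the kernel of $N_{n,d}^{\FF}\otimes_{\FF}\LL\twoheadrightarrow N_{n,d}^{\LL}$ directly: the dimension comparison in Remark~\ref{remark_n3} shows that for $n=3$ the extra basis elements of $N_{3,d}^{\FF}$ over $N_{3,d}^{\LL}$ all appear in degrees well below $C_{3,d}^{\FF}$. Proving in general that this kernel is generated in multidegrees bounded away from $C_{n,d}^{\LL}$---so that the additional relations present in $N_{n,d}^{\LL}$ but not in $N_{n,d}^{\FF}$ contribute only below the nilpotency threshold---would immediately give the conjecture.
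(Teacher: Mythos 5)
The statement you are addressing is Conjecture~\ref{conj1}, which the paper poses as an \emph{open problem}: it is verified there only for $n=2,3$ (all $d$, via Section~\ref{section_n3}) and for $n=4,5$ with $d=2$ (via Eick's computations), so there is no proof in the paper to compare against, and your text is --- as you yourself concede --- a research plan rather than a proof. Your easy direction $C_{n,d}^{\LL}\leq C_{n,d}^{\FF}$ is correct and is essentially part~2 of Corollary~\ref{cor1}; the surjection $N_{n,d}^{\FF}\otimes_{\FF}\LL\twoheadrightarrow N_{n,d}^{\LL}$ is a clean way to phrase it.

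The hard direction contains two genuine gaps beyond the one you flag. First, the descent step cannot be repaired: when $\#\FF<n$ the individual partial linearizations $L_{\la}$ are in general \emph{not} relations for $N_{n,d}^{\FF}$ --- by Theorem~\ref{theo1} only the Frobenius sums $F_{\un{\de}}=\sum L_{\un{\theta}}$ are. This is not an obstruction of Galois invariance ($L_{\la}$ already has prime-field coefficients, hence is fixed by any Galois action), but a strict inclusion of ideals of relations, witnessed concretely by $\dim N_{3,d}^{\FF_2}=\dim N_{3,d}^{\LL}+d(d-1)/2$: the relation $L_{21}(x,y)=0$ simply fails in $N_{3,d}^{\FF_2}$, where only $L_{21}+L_{12}=0$ holds. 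So ``derive $L_{\la}=0$ over $\FF'\supset\FF$ and descend'' produces statements that are false over $\FF$, and no Galois-invariant combination recovers them. Second, your complementary direction conflates two conditions: knowing that the kernel of $N_{n,d}^{\FF}\otimes_{\FF}\LL\to N_{n,d}^{\LL}$ is generated \emph{as an ideal} in degrees below $C_{n,d}^{\LL}$ does not imply that it vanishes in degree $C_{n,d}^{\LL}$ --- an ideal generated in low degree typically has nonzero components in all higher degrees. What is actually needed is that the kernel is \emph{concentrated} in degrees $<C_{n,d}^{\LL}$, which is equivalent to the conjecture itself, not a route to it. This concentration is exactly what the paper proves by hand for $n=3$, $\FF=\FF_2$ (Lemmas~\ref{lemma_n3_2} and~\ref{lemma_n3_3}, showing every word of degree $\geq4$ already lies in the $\FF$-span of an $\LL$-basis), and what it fails to prove for $\#\FF=n-1$, where Theorem~\ref{theo_sec4} only achieves $C_{n,d}^{\FF}\leq C_{n,d}^{\LL}+1$; neither argument fits your inductive scheme on the deficit $n-\#\FF$.
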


%=======================================================================================
%=======================================================================================
%---S2--------------------------------------------------------------------------------
\section{General case}\label{section_general}

We start with some notations. Let $\NN=\{1,2,\ldots\}$, $\NN_0=\NN\sqcup\{0\}$, $\ZZ$ be the ring of integers, and $\FF^{\ast}=\FF\backslash \{0\}$. For an $a\in\X_d$ and a letter $x$ we denote by $\deg_x(a)$ the degree of $a$ in the letter $x$, by $\mdeg(a)=(\deg_{x_1}(a),\ldots,\deg_{x_d}(a))$ the {\it multidegree} of $a$, and by $\deg(a)=\deg_{x_1}(a)+\cdots+\deg_{x_d}(a)$ the {\it degree} of $a$. As usually, the degree of $f=\sum_i\al_i a_i\in\FF\X_d$ is the maximum of degrees of words $a_i$. For short, we write $1^n$ for multidegree $(1,\ldots,1)$ ($n$ times). Given $\un{\al}=(\al_1,\ldots,\al_r)\in\NN_{0}^r$, we set $\#\un{\al}=r$, $|\un{\al}|=\al_1+\cdots+\al_r$. Given a prime $p$, denote by $\FF_p$ the field with $p$ entries. In case $p=0$ we set $\FF_p=\ZZ$.

We define $\X_d^{\#}=\X_d\sqcup\{1\}$ and $\X^{\#}=\X\sqcup\{1\}$. Given $\un{\theta}\in\NN_{0}^r$ with $|\un{\theta}|=n$, denote by $L_{\un{\theta}}(\un{x})=L_{\un{\theta}}(x_1,\ldots,x_r)\in\FF\X$ the coefficient of $\al_1^{\theta_1}\cdots \al_r^{\theta_r}$ in $(\al_1 x_1 +\cdots + \al_r x_r)^n$, where $\al_i\in\FF$. Note that $L_{\un{\theta}}(\un{x})$ is a non-zero element of $\FF\X$.  We say that $L_{\un{\theta}}(\un{x})$ is the {\it partial linearization} of $x^n$ of multidegree $\un{\theta}$. As an example, see above formula~\Ref{eq_intro_1} for $L_{1^n}$.

%----------R2.1---------------------------------------------
\begin{remark}\label{remark0}
If $\FF$ is infinite, then by the standard Vandermonde arguments (or see the proof of part~(a) of Theorem~\ref{theo1} below) we obtain that the ideal of relations for $N_{n,d}$  is generated by $L_{\un{\theta}}(\un{a})$ for $|\un{\theta}|=n$ and $a_i\in\X_d$ for all $i$.
\end{remark}
%\bigskip

%----------R2.2---------------------------------------------
\begin{remark}\label{remark_important}
Assume that the characteristic of $\FF$ is $p\geq0$.  Let $A=\FF\X_d/I$ for the ideal $I$ generated by some polynomials from $\FF_p\X_d$. Consider a basis $B$ of $A$ consisting of words. Then in the algebra $A$ we have
\begin{enumerate}
 \item[(a)] every $w\in\X_d$ belongs to the $\FF_p$-span of $B$ in $A$;

 \item[(b)] if $w_1,\ldots,w_r\in\X_d$ are linearly dependent in $A$, then $\sum_i \al_i w_i=0$ in $A$ for such $\al_i\in\FF_p$ that not all of them are equal to zero.
\end{enumerate}
\end{remark}
\bigskip

Remarks~\ref{remark0} and~\ref{remark_important} imply that if $\FF$ and $\LL$ are infinite fields and $\Char{\FF}=\Char{\LL}$, then 
\begin{enumerate}
\item[$\bullet$] every basis for $N_{n,d}^{\FF}$ consisting of words is a basis for $N_{n,d}^{\LL}$;

\item[$\bullet$] $C_{n,d}^{\FF}=C_{n,d}^{\LL}$.
\end{enumerate}% 

%---D2.3--------------------------------------------------------------------
\definition{of Frobenius partial linearization}\label{def_Frobenius} Assume that $\FF$ is finite and $q=\#\FF$. Given $\un{\de}\in\NN^r$ with $|\un{\de}|=n$, we say that the {\it Frobenius partial linearization} of $x^n$ of multidegree $\un{\delta}$ is 
$$F_{\un{\delta}}(\un{x})=F_{\un{\delta}}(x_1,\ldots,x_r)=\sum L_{\un{\theta}}(x_1,\ldots,x_r)\in\FF\X,$$
where $\un{\theta}$ ranges over vectors from $\NN^r$ satisfying 
\begin{enumerate}
\item[$\bullet$] $|\un{\theta}|=n$;

\item[$\bullet$] $\theta_i\equiv\delta_i \;({\rm mod}\; q-1)$ for all $i$.
\end{enumerate} 
\medskip

As an example, if $n=5$ and $\#\FF=3$, then $F_{41}(\un{x})=L_{41}(\un{x})+L_{23}(\un{x})$, $F_{32}(\un{x})=L_{32}(\un{x})+L_{14}(\un{x})$,  and $F_{311}(\un{x})=L_{311}(\un{x})+L_{131}(\un{x})+L_{113}(\un{x})$. Note that $L_{\un{\theta}}(x_1,\ldots,x_r)=L_{\si\un{\theta}}(x_{\si(1)},\ldots,x_{\si(r)})$ for all permutations $\si\in S_r$, where $\si\un{\theta}$ stands for $(\theta_{\si(1)},\ldots,\theta_{\si(r)})$. Similar remark holds for $F_{\un{\de}}(\un{x})$. We will use these remarks without references to them.

A vector $\un{\theta}\in\NN^r$ is called {\it ordered} if $\theta_1\geq \cdots \geq \theta_r$. A vector $\un{\de}\in\NN^r$ is called {\it $q$-maximal} if $\un{\de}\geq \un{\theta}$ for every $\un{\theta}\in\NN^r$ with $|\un{\theta}|=|\un{\de}|$ and $\theta_i\equiv\delta_i \;({\rm mod}\; q-1)$ for all $i$, where $\geq$ stands for the usual lexicographical order on $\NN^r$.

%----------R2.4---------------------------------------------
\begin{remark}\label{remark1} For $q=\#\FF$ we have that 
\begin{enumerate}
\item[(a)] if $q\geq n$, then $F_{\un{\delta}}(\un{x})=L_{\un{\delta}}(\un{x})$;

\item[(b)] every word $a\in\X_r$ of degree $n$, satisfying $\deg_{x_i}(a)>0$ for all $1\leq i\leq r$, is a summand of $F_{\un{\de}}(\un{x})$ for one and only one $q$-maximal vector $\un{\de}\in\NN^r$ with $|\un{\de}|=n$;

\item[(c)] for every $\un{\de}\in \NN^r$ there exists a $q$-maximal $\un{\nu}\in\NN^r$ satisfying $F_{\un{\de}}(\un{x})=F_{\un{\nu}}(\un{x})$ and $|\un{\de}|=|\un{\nu}|$. 
\end{enumerate}
\end{remark}

%------T2.5--------------------------------------------------
\begin{theo}\label{theo1}
Assume $\FF$ is finite and $q=\#\FF$. Then the ideal of relations for $N_{n,d}$ is generated as a vector space over $\FF$ by $uf(a_1,\ldots,a_r)v$ for all $a_1,\ldots,a_r\in\X_d$, $u,v\in\X_d^{\#}$ and $f\in S$, where the set $S$ is defined as follows:
\begin{enumerate}
\item[(a)] if $q\geq n$, then $S$ consists of $L_{\un{\theta}}(\un{x})$ for all ordered $\un{\theta}$ with $|\un{\theta}|=n$; 

\item[(b)] if $q<n$, then $S$ consists of $F_{\un{\de}}(\un{x})$ for all ordered $q$-maximal $\un{\de}$ with $|\un{\de}|=n$.
\end{enumerate} 
\end{theo}
\bigskip

We split the proof of the theorem into several lemmas. Given $1\leq r\leq n$, denote 
$$P_r(\un{x})=P_r(x_1,\ldots,x_r)=\sum L_{\un{\theta}}(x_1,\ldots,x_r),$$
where the sum ranges over all $\un{\theta}\in\NN^r$ with $|\un{\theta}|=n$.

%---L2.6------------------------
\begin{lemma}\label{lemmaP}
We have $P_r(a_1,\ldots,a_r)=0$ in $N_{n,d}$ for all $1\leq r\leq n$ and $a_1,\ldots,a_r\in\FF\X_d$.
\end{lemma}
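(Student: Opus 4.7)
The plan is to induct on $r$. The base case $r=1$ is immediate: the only ordered vector $\un{\theta}\in\NN^1$ with $|\un{\theta}|=n$ is $(n)$, and $L_{(n)}(x_1)=x_1^n$, which is zero in $N_{n,d}$ by the defining relation.

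For the inductive step I will use the master identity from the definition of $L_{\un{\theta}}$. Setting $\al_1=\cdots=\al_r=1$ in
\[
(\al_1 x_1 + \cdots + \al_r x_r)^n = \sum_{\un{\theta}\in\NN_0^r,\, |\un{\theta}|=n}\al_1^{\theta_1}\cdots\al_r^{\theta_r}\, L_{\un{\theta}}(x_1,\ldots,x_r)
\]
gives, after substituting $x_i \mapsto a_i$,
\[
(a_1+\cdots+a_r)^n \;=\; \sum_{\un{\theta}\in\NN_0^r,\, |\un{\theta}|=n} L_{\un{\theta}}(a_1,\ldots,a_r).
\]
Since $a_1+\cdots+a_r\in\FF\X_d$, the left-hand side is zero in $N_{n,d}$.

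Next I group the terms on the right by the support $J=\{i : \theta_i>0\}\subseteq\{1,\ldots,r\}$. For a fixed non-empty $J$, if $\theta_i=0$ for $i\notin J$, then $L_{\un{\theta}}(x_1,\ldots,x_r)$ involves no letter $x_i$ with $i\notin J$, and it coincides with $L_{\un{\theta}|_J}(x_j : j\in J)$, where $\un{\theta}|_J\in\NN^J$ is obtained by dropping the zero entries. Summing over such $\un{\theta}$ reproduces, by the very definition of $P_{|J|}$, the expression $P_{|J|}(a_j : j\in J)$. Therefore
\[
0 \;=\; (a_1+\cdots+a_r)^n \;=\; \sum_{\emptyset\neq J\subseteq\{1,\ldots,r\}} P_{|J|}(a_j : j\in J) \quad \text{in } N_{n,d}.
\]
By the induction hypothesis every summand with $|J|<r$ vanishes, leaving only the term $J=\{1,\ldots,r\}$, which is exactly $P_r(a_1,\ldots,a_r)$. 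Hence $P_r(a_1,\ldots,a_r)=0$ in $N_{n,d}$.

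There is no real obstacle here; the only thing that needs care is the bookkeeping in the grouping by support, i.e.\ correctly identifying $L_{\un{\theta}}(x_1,\ldots,x_r)$ with $L_{\un{\theta}|_J}$ when some $\theta_i$ vanish. This is an elementary consequence of the definition of $L_{\un{\theta}}$ as a coefficient in the non-commutative $n$-th power, but it is the one place where the argument must be spelled out carefully.
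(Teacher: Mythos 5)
Your proof is correct and takes essentially the same route as the paper: induction on $r$ together with the decomposition of $(a_1+\cdots+a_r)^n$ into the sum of $P_k(a_{i_1},\ldots,a_{i_k})$ over nonempty subsets, which the paper states directly and you derive by grouping the $L_{\un{\theta}}$ by support. The extra bookkeeping you supply is just an expanded justification of the identity the paper asserts.
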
 
\begin{proof}
We prove by the inducton on $r$. If $r=1$, then $P_1(a_1)=a_1^n=0$ in $N_{n,d}$ for all $a_1\in\FF\X_d$.

Assume that for all $1\leq k<r$ we have $P_k(a_1,\ldots,a_k)=0$ in $N_{n,d}$ for all $a_1,\ldots,a_k\in\FF\X_d$. Since $(a_1+\cdots+a_r)^n$ is equal to
$$P_r(a_1,\ldots,a_r)+\sum_{1\leq k\leq r-1}\sum P_k(a_{i_1},\ldots,a_{i_k}),$$
where the second sum ranges over all $1\leq i_1<\cdots<i_k\leq r$, the induction hypothesis completes the proof.
\end{proof}

%---L2.7------------------------
\begin{lemma}\label{lemmaR}
Let $\FF$ be finite and $q=\#\FF$. Then $F_{\un{\delta}}(\un{a})=0$ in $N_{n,d}$ for every $\un{\delta}$ with $|\un{\de}|=n$ and $a_i\in\FF\X_d$ for all $i$. In particular, in case $q\geq n$ we have $L_{\un{\theta}}(\un{a})=0$ in $N_{n,d}$ for all $\un{\theta}$ with $|\un{\theta}|=n$ and $a_i\in\FF\X_d$. 
\end{lemma}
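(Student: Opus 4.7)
The plan is to combine Lemma~\ref{lemmaP} with a Vandermonde-style character sum over $(\FF^*)^r$ in order to isolate each Frobenius partial linearization $F_{\un{\de}}$ from the larger sum $P_r$. First I would invoke Lemma~\ref{lemmaP} at scaled arguments: for arbitrary $\al_1,\ldots,\al_r\in\FF^*$, applying the lemma to the elements $\al_i a_i\in\FF\X_d$ gives $P_r(\al_1 a_1,\ldots,\al_r a_r)=0$ in $N_{n,d}$. Because each $L_{\un{\theta}}$ is multihomogeneous of multidegree $\un{\theta}$, this unpacks to
$$\sum_{\un{\theta}\in\NN^r,\,|\un{\theta}|=n}\al_1^{\theta_1}\cdots\al_r^{\theta_r}\,L_{\un{\theta}}(a_1,\ldots,a_r)=0\quad\text{in }N_{n,d},$$
valid for every choice of $\al_i\in\FF^*$.

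Next I would multiply this identity by $\al_1^{q-1-\de_1}\cdots\al_r^{q-1-\de_r}$ and sum over all $(\al_1,\ldots,\al_r)\in(\FF^*)^r$. The coefficient in front of each $L_{\un{\theta}}(\un{a})$ then becomes $\prod_{i=1}^{r}\sum_{\al_i\in\FF^*}\al_i^{\theta_i-\de_i}$. Using the standard orthogonality identity that $\sum_{\al\in\FF^*}\al^k$ equals $-1$ when $(q-1)\mid k$ and $0$ otherwise, only those $\un{\theta}$ with $\theta_i\equiv\de_i\pmod{q-1}$ for every $i$ survive, and the surviving combination is exactly $(-1)^r F_{\un{\de}}(\un{a})$. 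Since $\Char\FF$ divides $q$ so that $-1\neq 0$ in $\FF$, this yields $F_{\un{\de}}(\un{a})=0$ in $N_{n,d}$.

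For the ``In particular'' clause, I would observe that if $q\geq n$ and $r\geq 2$, every positive $\un{\theta}$ with $|\un{\theta}|=n$ satisfies $1\leq\theta_i\leq n-(r-1)\leq n-1\leq q-1$, so the congruence $\theta_i\equiv\de_i\pmod{q-1}$ with $\theta_i,\de_i\in\{1,\ldots,q-1\}$ forces $\theta_i=\de_i$; hence $F_{\un{\de}}=L_{\un{\de}}$. The case $r=1$ is immediate from $L_{(n)}(a_1)=a_1^n=0$.

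I do not anticipate a real obstacle. The only point requiring a moment's care is that $P_r$ already ranges over $\un{\theta}\in\NN^r$ with strictly positive coordinates, and $\un{\de}\in\NN^r$ by definition, so no ``boundary'' terms with $\theta_i=0$ can leak into the character sum; this compatibility is exactly what makes the orthogonality argument extract $F_{\un{\de}}$ rather than some larger combination.
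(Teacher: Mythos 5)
Your proof is correct and follows essentially the same route as the paper: both start from Lemma~\ref{lemmaP} evaluated at the unit-scaled arguments $(\al_1a_1,\ldots,\al_ra_r)$ for all $\un{\al}\in(\FF^*)^r$ and then invert the resulting linear system to isolate $F_{\un{\de}}(\un{a})$. The only difference is cosmetic: you perform the inversion explicitly via the orthogonality relation $\sum_{\al\in\FF^*}\al^k\in\{-1,0\}$, whereas the paper establishes nonsingularity of the same system by an iterated Vandermonde elimination, coordinate by coordinate; your version even avoids the paper's preliminary reduction to $q$-maximal $\un{\de}$.
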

\begin{proof}
By part (c) of Remark~\ref{remark1}, without loss of generality we can assume that $\un{\de}\in\NN^r$ is a $q$-maximal vector, where $r>0$. Let $\{\un{\de}=\un{\de}^{1},\ldots,\un{\de}^{k}\}$ be the set of all pairwise different $q$-maximal vectors of $\NN^r$ with $|\un{\de}^{i}|=n$. By part~(b) of Remark~\ref{remark1}, we obtain
$$P_r(\un{x})=F_{\un{\de}^{1}}(\un{x})+\cdots+F_{\un{\de}^{k}}(\un{x}).$$
Consider vectors $\un{\nu}^1,\ldots,\un{\nu}^k$ from $\NN_0^r$ such that $0\leq \nu_i^j\leq q-2$ and $\nu_i^j\equiv \de_i^j \;({\rm mod}\; q-1)$ for all $i$ and $j$. By the equality $\al^{q-1}=\al^0$ in $\FF$, for $\al_1,\ldots,\al_r$ from $\FF$ we have 
$$P_r(\al_1 a_1,\ldots,\al_r a_r) = \un{\al}^{\un{\nu}^{1}}F_{\un{\de}^{1}}(a_1,\ldots,a_r)  + \cdots + \un{\al}^{\un{\nu}^{k}}F_{\un{\de}^{k}}(a_1,\ldots,a_r)$$
for all $a_1,\ldots,a_r\in\FF\X_d$, where $\un{\al}^{\un{\nu}^{j}}$ stands for $\al_1^{\nu^j_1}\cdots\al_r^{\nu^j_r}$.
Denote by $(E_{\un{\al}})$ the following linear equation in variables $y_1,\ldots,y_k$: 
$$(E_{\un{\al}}): \qquad\qquad %
\un{\al}^{\un{\nu}^{1}} y_1  + \cdots + \un{\al}^{\un{\nu}^{k}} y_k=0.$$
Consider the system of all linear equations $(E_{\un{\al}})$, 
where $\al_1,\ldots,\al_r$ range over the set of non-zero elements of $\FF$. 

Consider some non-zero elements $\al_2,\ldots,\al_r$ from $\FF$. For $0\leq s\leq q-2$ denote   
$$z_s=z_s(\al_2,\ldots,\al_r)=\sum\nolimits_j \al_2^{\nu^j_2}\cdots \al_r^{\nu^j_r} y_j,$$
where the sum ranges over all $1\leq j\leq k$ with $\nu^j_1=s$. In case $\nu^j_1 \neq s$ for all $j$ we set $z_s=0$ and say that $z_s$ is trivial. Since $0\leq\nu^{j}_1 \leq q-2$ for all $j$, the equation $z_0+\al_1 z_1+\al_1^2 z_2+\cdots+\al_1^{q-2} z_{q-2}=0$ holds for all $\al_1\in\FF$. Denote non-zero elements of $\FF$ by $\xi_1,\ldots,\xi_{q-1}$. Therefore, $B\cdot (z_0,\ldots,z_{q-2})^T=0$ for the Vandermonde matrix  
$$B=
\left(\begin{array}{ccccc}
1 & \xi_1 & \xi_1^2 & \cdots & \xi_1^{q-2} \\
1 & \xi_2 & \xi_2^2 & \cdots & \xi_2^{q-2} \\
\vdots & \vdots & \vdots & & \vdots \\
1 & \xi_{q-1} & \xi_{q-1}^2 & \cdots & \xi_{q-1}^{q-2} \\       
\end{array}
\right).
$$
Then $\det(B)=\prod_{1\leq i<j<q} (\xi_j-\xi_i)$ is not zero and $z_0=\cdots=z_{q-2}=0$ for all $\al_2,\ldots,\al_r$. 

Then for every non-trivial $z_s$ we consider the system of linear equations $z_s(\al_2,\ldots,\al_r)=0$,  where $\al_2,\ldots,\al_r$ range over the set of non-zero elements of $\FF$. 
Note that for every $1\leq j\leq k$ the variable $y_j$ appears in one and only one such system of linear equations. Repeating the above reasoning for $\al_2$, $\al_3$ and so on, we finally obtain that $y_1=\cdots=y_k=0$.  Lemma~\ref{lemmaP} concludes the proof of the first part of the lemma. The second part follows from part~(a) of Remark~\ref{remark1}.
\end{proof}

%\medskip

Now we can proof Theorem~\ref{theo1}.

\begin{proof}
By part~(b) of Remark~\ref{remark1}, for $a_1,\ldots,a_k$ from $\X_d$ and $\al_1,\ldots,\al_k$ from $\FF$ we have
$$(\al_1 a_1 + \cdots + \al_k a_k)^n = 
\sum_{r=1}^{\min\{n,k\}} \sum_{\un{\de}} \sum_{\varphi} F_{\un{\de}} (\al_{\varphi(1)} a_{\varphi(1)},\ldots,\al_{\varphi(k)} a_{\varphi(k)}),$$
where $\un{\de}$ ranges over $q$-maximal vectors from $\NN^r$ with $|\un{\de}|=n$ and $\varphi$ ranges over strictly increasing maps $\ov{1,r}\to \ov{1,k}$. Since 
$$F_{\un{\de}}(\al_1 a_1,\ldots,\al_r a_r) = \al_1^{\de_1}\cdots \al_r^{\de_r} F_{\un{\de}}(a_1,\ldots,a_r),$$% 
Lemma~\ref{lemmaR} implies that the ideal of relations for $N_{n,d}$ is generated by all $F_{\un{\de}}(\un{a})$ with $q$-maximal $\un{\de}$ satisfying $|\un{\de}|=n$ and $a_i\in\X_d$. Parts~(a) and~(c) of Remark~\ref{remark1} complete the proof of parts (a) and (b) of the theorem, repectively.
\end{proof}

%---E2.8------------------------------------------------
\begin{example}\label{ex1} In the formulation of Theorem~\ref{theo1} the set $S$ is the following one:
\begin{enumerate}
\item[$\bullet$] if $\#\FF=n-1$, then $S$ consists of $L_{n-1,1} +L_{1,n-1}$ and $L_{\un{\theta}}$ for all ordered $\un{\theta}$ with $|\un{\theta}|=n$ and $\un{\theta}\neq (n-1,1)$;  

\item[$\bullet$] if $\#\FF=2$, then $S=\{P_1,\ldots,P_n\}$;   

\item[$\bullet$] if $n=5$ and $\#\FF=3$, then $S$ consists of $L_5$, $L_{41}+L_{23}$, $L_{311}+L_{131}+L_{113}$, $L_{221}$, $L_{2111}$, $L_{1^5}$.  
\end{enumerate}
\end{example}

%---C2.9-----------------------------------------------------
\begin{cor}\label{cor1}
Assume $\FF$ is a finite field and $\LL$ is a finite or infinite field with $\Char{\FF}=\Char{\LL}=p$ and $\#\FF<\#\LL$. 
\begin{enumerate}
\item[1.] Let $\#\FF<n$ and $\#\LL<n$. If $l|t$ for $\#\FF=p^l$ and $\#\LL=p^t$, then $C_{n,d}^{\FF}\geq C_{n,d}^{\LL}$.

\item[2.] If $\#\FF<n\leq \#\LL$, then $C_{n,d}^{\FF}\geq C_{n,d}^{\LL}$.

\item[3.] If $\#\FF\geq n$ and $\#\LL\geq n$, then $C_{n,d}^{\FF}= C_{n,d}^{\LL}$.
\end{enumerate}
\end{cor}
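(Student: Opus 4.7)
The plan is to reduce all three parts to a comparison of ideals inside $\FF_p\X_d$. For any field $K$ of characteristic $p$, Theorem~\ref{theo1} (when $K$ is finite) or Remark~\ref{remark0} (when $K$ is infinite) exhibits the relation ideal $I_K\subset K\X_d$ of $N_{n,d}^K$ as the two-sided ideal generated by a set $S_K$ of polynomials lying entirely in $\FF_p\X_d$: when $\#K\geq n$ the set $S_K$ consists of the $L_{\un{\theta}}(\un{a})$, and when $\#K<n$ it consists of the Frobenius partial linearizations $F_{\un{\de}}(\un{a})$. Let $J_K\subset\FF_p\X_d$ be the two-sided ideal generated by the same $S_K$. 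Since $K$ is a free $\FF_p$-module admitting a basis that contains $1$, the standard flatness argument gives $I_K\cap\FF_p\X_d=J_K$, so the nilpotency degree rewrites as
$$C_{n,d}^K=\min\{c>0: w\in J_K\text{ for every }w\in\X_d\text{ of degree }c\}.$$
Consequently any containment $J_{\FF}\subset J_{\LL}$ implies $C_{n,d}^{\FF}\geq C_{n,d}^{\LL}$, and any equality $J_{\FF}=J_{\LL}$ implies $C_{n,d}^{\FF}=C_{n,d}^{\LL}$.

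Part~3 is now immediate: the generating sets $S_{\FF}$ and $S_{\LL}$ are the same collection $\{L_{\un{\theta}}(\un{a})\}_{|\un{\theta}|=n,\,\un{a}\in\X_d^r}$, so $J_{\FF}=J_{\LL}$. For part~2, $S_{\LL}$ is this same collection of $L_{\un{\theta}}(\un{a})$, while $S_{\FF}$ consists of the Frobenius partial linearizations $F_{\un{\de}}(\un{a})$ for $q$-maximal $\un{\de}$, where $q=\#\FF$; by the very definition of $F_{\un{\de}}$ each such generator is a sum of $L_{\un{\theta}}$'s with $|\un{\theta}|=n$, so $S_{\FF}\subset J_{\LL}$, and hence $J_{\FF}\subset J_{\LL}$.

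Part~1 relies on the elementary arithmetic fact that $p^l-1$ divides $p^t-1$ whenever $l$ divides $t$. It follows that $\theta_i\equiv\de_i\pmod{p^t-1}$ implies $\theta_i\equiv\de_i\pmod{p^l-1}$, so for each $(p^l)$-maximal $\un{\de}$ with $|\un{\de}|=n$ the index set of the $\FF$-Frobenius $F_{\un{\de}}$ partitions into the analogous classes mod $p^t-1$, each of which is the index set of some $\LL$-Frobenius $F_{\un{\nu}^j}$ with $\un{\nu}^j$ being $(p^t)$-maximal (by Remark~\ref{remark1}(c) applied over $\LL$). Hence the identity $F_{\un{\de}}=\sum_j F_{\un{\nu}^j}$ holds in $\FF_p\X_d$, giving $J_{\FF}\subset J_{\LL}$ and $C_{n,d}^{\FF}\geq C_{n,d}^{\LL}$. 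The only step that is not purely formal is the flatness identity $I_K\cap\FF_p\X_d=J_K$; once it is verified, the three parts follow directly from Theorem~\ref{theo1} and Remark~\ref{remark1}.
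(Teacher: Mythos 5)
Your proof is correct. For parts 2 and 3 it is essentially the paper's own argument: both reduce to the observation that Theorem~\ref{theo1} and Remark~\ref{remark0} present the relation ideals by generators lying in $\FF_p\X_d$, with the $\FF$-generators contained in the $\FF_p$-span of the $\LL$-generators (part 2) or coinciding with them (part 3); your flatness identity $I_K\cap\FF_p\X_d=J_K$ is just the content of Remark~\ref{remark_important} made explicit. The genuine difference is in part 1. The paper disposes of it in one line: $l\mid t$ gives a field embedding $\FF\subset\LL$, so the relation ideal of $N_{n,d}^{\LL}$ contains the $\LL$-span of that of $N_{n,d}^{\FF}$, and the inequality follows without invoking Theorem~\ref{theo1} at all. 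You instead stay inside the generator framework and use the arithmetic fact $p^l-1\mid p^t-1$ to write each Frobenius partial linearization $F_{\un{\de}}$ over $\FF$ as a sum of Frobenius partial linearizations over $\LL$: the congruence classes mod $p^t-1$ refine those mod $p^l-1$, and Remark~\ref{remark1}(c) lets you replace each class representative by a $(p^t)$-maximal one. This is a valid and more explicit route --- it exhibits the containment of relation ideals directly at the level of the generating identities of Theorem~\ref{theo1} --- at the cost of a little combinatorics that the embedding argument avoids; conversely, the embedding argument is shorter but less informative about how the two systems of defining relations are related.
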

\begin{proof}
In part~1 we can assume that $\FF\subset \LL$, which implies the required. Consider parts~2 and 3 of the theorem. 
Theorem~\ref{theo1} and Remark~\ref{remark0} imply that the ideal of relations for $N_{n,d}^{\FF}$ ($N_{n,d}^{\LL}$, respectively) is generated by some set $W^{\FF}\subset\FF_p\X_d$ (a set $W^{\LL}\subset\FF_p\X_d$, respectively). Moreover, $W^{\FF}$ belongs to $\FF_p$-span of $W^{\LL}$ in part~2 and $W^{\FF}=W^{\LL}$ in part~3. The proof is completed.
\end{proof}

%=======================================================================================
%=======================================================================================
%---S3--------------------------------------------------------------------------------
\section{The case of $x^3=0$}\label{section_n3}

In this section we investigate $N_{3,d}^{\FF}$ for any field $\FF$.  In the case of an infinite field $\FF$, the nilpotency degree and a basis for $N_{3,d}^{\FF}$ were established in~\cite{Lopatin_Comm2} developing ideas from~\cite{Lopatin_Comm1}. By Theorem~\ref{theo1} and Remarks~\ref{remark0} and~\ref{remark_important}, in the case of $\#\FF>2$, the nilpotency degree and a basis consisting of words are the same as in the case of infinite field of the same characteristic. So the only case that is left to consider is the case of $\FF=\FF_2$.

%---T3.1-----------------------------------------------------------------------------
\begin{theo}\label{theo_n3}
Let $\FF=\FF_2$. Then for every infinite field $\LL$ of characteristic $2$ we have
\begin{enumerate}
 \item[(a)] 
$C_{3,d}^{\FF}=C_{3,d}^{\LL}=\left\{
\begin{array}{rl}
6,& \text{ if }d=2\\
d+3,& \text{ if }d\geq3\\ 
\end{array} 
\right.;$ 
\item[(b)] $\dim{N_{3,d}^{\FF}}=\dim{N_{3,d}^{\LL}}+d(d-1)/2$.
\end{enumerate}
\end{theo}
\bigskip

The proof of this theorem is given at the end of the section.

%---D3.2-----------------------------------------------------------------------
\definition{of $\approx$} Given an $f\in\FF\X_d$,  we write $f\approx0$ in $N_{n,d}$ if and only if either $f=0$ in $N_{n,d}$ or $f=\sum_i\al_iw_i$ in $N_{n,d}$ for $\al_i\in\FF$ and $w_i\in\X$ satisfying $\deg(w_i)>\deg(f)$.
%\bigskip 

%---R3.3------------------------------------------------------------------
\begin{remark}\label{remark_approx}
If $f\approx0$ in $N_{n,d}$, then $ufv\approx0$ in $N_{n,d}$ for all $u,v\in\X_d^{\#}$. On the other hand, note that in case $f\approx0$ and $h\approx0$ in $N_{n,d}$ we can have that $f+h\not\approx0$ in $N_{n,d}$. Similarly, in case $f\approx0$ in $N_{n,d}$ and $f=h$ in $N_{n,d}$ we can have that $h\not\approx0$ in $N_{n,d}$. 
\end{remark}
\bigskip
 
We will use the following remark to obtain relations for $N_{n,d}$.

%---R3.4-----------------------------------------------------------------------
\begin{remark}\label{remark_inversion}
Define the inversion $\ast$ on $\FF\X$ as follows: $x_i^{\ast}=x_i$ for all $i$, $(ab)^{\ast}=b^{\ast}a^{\ast}$ for all $a,b\in\X$ and $\ast:\FF\X\to\FF\X$ is a linear map. Then for any relation $f\in\FF\X_d$ for $N_{n,d}$ we have that $f^{\ast}$ is also a relation for $N_{n,d}$.
\end{remark}
\bigskip

In the rest of this section we assume that $\FF=\FF_2$ unless otherwise stated. By Theorem~\ref{theo1} and Example~\ref{ex1}, we have 
\begin{eq}\label{eq_L111}
L_{111}(x,y,z)=0 \text{ in }N_{3,d}, 
\end{eq}
\vspace{-0.5cm}
\begin{eq}\label{eq_L21_L12}
L_{21}(x,y)+L_{12}(x,y)=0 \text{ in }N_{3,d}.
\end{eq}%
Moreover, the ideal of relations for $N_{3,d}$ is generated by $x^3$ and left hand sides of equalities~\Ref{eq_L111} and~\Ref{eq_L21_L12} for $x,y,z\in\X_d$. Note that $L_{111}(x,y,z)=xyz+xzy+yxz+yzx+zxy+zyx$ and $L_{21}(x,y)=x^2y+xyx+yx^2$. By the straightforward calculation we can see that  
$$x L_{21}(y,z)= L_{21}(y,xz) + L_{111}(x,y,yz) - y L_{111}(x,y,z)$$% 
holds in $\FF\X$. Then equality~\Ref{eq_L111} implies that
\begin{eq}\label{eq_star1}
x L_{21}(y,z)= L_{21}(y,xz) \text{ in }N_{3,d}.
\end{eq}%
By Remark~\ref{remark_inversion}, we have
\begin{eq}\label{eq_star2}
L_{21}(x,y)z= L_{21}(x,yz) \text{ in }N_{3,d}.
\end{eq}

%---L3.5-----------------------------------------------------------------------------
\begin{lemma}\label{lemma_n3_2}
We have $L_{21}(a,b)\approx0$ in $N_{3,d}$ for all $a,b\in\X_d$ with $\deg(a)>1$.
\end{lemma}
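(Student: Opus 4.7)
The plan is to split on the sign of $\deg(b)-\deg(a)$. Substituting the words $a$ and $b$ into \eqref{eq_L21_L12} (a valid substitution by Theorem~\ref{theo1}), and using $\Char{\FF}=2$, one has
\[
L_{21}(a,b) \;=\; L_{12}(a,b) \;=\; ab^{2}+bab+b^{2}a \qquad \text{in } N_{3,d}.
\]
If $\deg(b) > \deg(a)$, each summand on the right has degree $\deg(a)+2\deg(b)>2\deg(a)+\deg(b)=\deg L_{21}(a,b)$, so $L_{21}(a,b)\approx 0$ is immediate. This disposes of the easy case.

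For the hard case $\deg(b)\le\deg(a)$, the direct use of \eqref{eq_L21_L12} goes the wrong way, since $L_{12}(a,b)$ now has degree $\leq \deg L_{21}(a,b)$. Since $\deg(a)\ge 2$, factor $a=a_{1}a_{2}$ with $a_{1}\in\X_{d}$ a single letter and $a_{2}\in\X_{d}$ of positive degree. I would try to combine the transfer identities \eqref{eq_star1}--\eqref{eq_star2}, which move factors across the second slot of an $L_{21}$, with the identity $L_{111}(a_{1},a_{2},b)=0$ coming from \eqref{eq_L111}, in order to rewrite the three summands $a^{2}b,\ aba,\ ba^{2}$ of $L_{21}(a,b)$ so that an $L_{21}(c,d)$ with $\deg(d)>\deg(c)$ splits off, to which the easy case then applies. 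The natural inductive parameter is $\deg(a)-\deg(b)\ge 0$, with the induction step shifting one letter from $a$ into the $b$-slot.

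The main obstacle is the hard case, and in particular the subcase $\deg(a)=\deg(b)$, in which no multi-degree argument separates $L_{21}(a,b)$ from $L_{12}(a,b)$. Because \eqref{eq_L111}, \eqref{eq_star1} and \eqref{eq_star2} are all homogeneous in the total grading, degree strictly increases only when a substitution forces a longer word into a slot previously occupied by a single letter; arranging the rewriting so that this happens in a controlled way (a single application of $L_{111}(a_{1},a_{2},b)=0$, possibly combined with the involution of Remark~\ref{remark_inversion} to treat the left/right symmetric terms $a^{2}b$ and $ba^{2}$), is, I believe, the technical heart of the proof.
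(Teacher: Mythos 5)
Your easy case ($\deg(b)>\deg(a)$) is fine, but the hard case is left as a sketch, and the sketch points in a direction that does not work. The missing idea in the paper's proof is that the factorization of $a$ must be \emph{balanced}: one writes $a=xy$ with $\deg(x)$ and $\deg(y)$ equal (or differing by one), then uses \Ref{eq_star1_prime}, \Ref{eq_star2_prime} and the inversion of Remark~\ref{remark_inversion} to rewrite the three summands $xyxyz$, $zxyxy$, $xyzxy$ of $L_{21}(xy,z)$ (formulas \Ref{eq_star4}, \Ref{eq_star4_prime}, \Ref{eq_star5}), arriving at $L_{21}(xy,z)=L_{111}(x^2,y^2,z)+\sum L_{21}(\cdot,\cdot)$, which by \Ref{eq_L111} and \Ref{eq_L21_L12} becomes a sum of terms $L_{12}(x,\cdot)$ and $L_{12}(y,\cdot)$ whose words have degrees $k_1=\deg(x)+4\deg(y)+2\deg(b)$ and $k_2=4\deg(x)+\deg(y)+2\deg(b)$. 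Both exceed $k_0=2\deg(x)+2\deg(y)+\deg(b)$ \emph{only because} the split is balanced: with your split $a=a_1a_2$, $a_1$ a single letter, one gets $k_2=\deg(a)+3+2\deg(b)$ versus $k_0=2\deg(a)+\deg(b)$, and $k_2>k_0$ fails as soon as $\deg(a)\geq\deg(b)+3$. So the single-letter peeling cannot close the degree count.

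The proposed induction on $\deg(a)-\deg(b)$ is also unsafe as stated: the relation $\approx$ is not stable under substitution into identities of $N_{3,d}$ (Remark~\ref{remark_approx} explicitly warns that $f\approx0$ and $f=h$ in $N_{n,d}$ do not give $h\approx0$), so ``shifting one letter from $a$ into the $b$-slot'' and invoking the inductive hypothesis would need a separate justification that the error terms produced at each step still have strictly larger degree. In short: you have correctly located the technical heart of the proof, but you have not supplied it, and the specific decomposition you propose would break down precisely in the regime $\deg(a)\gg\deg(b)$ that the lemma must cover.
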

\begin{proof} In this proof we work in $N_{3,d}$. 
It is convenient to rewrite equalities~\Ref{eq_star1} and~\Ref{eq_star2} as follows
\begin{eq}\label{eq_star1_prime}
xyzy = xy^2z + xzy^2 + L_{21}(y,xz),
\end{eq}%
\vspace{-0.5cm}
\begin{eq}\label{eq_star2_prime}
xyxz = x^2yz + yx^2z + L_{21}(x,yz),
\end{eq}%
respectively. Making a substitution $z\to yz$ in~\Ref{eq_star2_prime}, we obtain
$$xyxyz = x^2 y^2 z + y x^2 y z + L_{21}(x,y^2 z).$$
Application of~\Ref{eq_star2_prime} to $yx^2yz$ gives us 
\begin{eq}\label{eq_star4}
xyxyz=y^2x^2z + L_{21}(x,y^2z) + L_{21}(y,x^2z).
\end{eq}%
By Remark~\ref{remark_inversion}, we have
\begin{eq}\label{eq_star4_prime}
zxyxy=zy^2x^2 + L_{21}(y,zx^2) + L_{21}(x,zy^2).
\end{eq}%
Making substitutions $y\to yz$ and $z\to y$ in~\Ref{eq_star2_prime}, we obtain
$$xyzxy = x^2 yzy + yzx^2y + L_{21}(x,yzy).$$
Application of~\Ref{eq_star1_prime} to $x^2 yzy$ and the equality $yzx^2 y = y^2 z x^2 + zx^2y^2+L_{21}(y,zx^2)$ in $\FF\X$ gives us 
\begin{eq}\label{eq_star5}
xyzxy = x^2 y^2 z + x^2 z y^2 + z x^2 y^2 + y^2 z x^2  + L_{21}(y,x^2z) + L_{21}(y,zx^2) + L_{21}(x,yzy).
\end{eq}%
Consider $L_{21}(xy,z)=xyxyz + zxyxy + xyzxy$. Applying~\Ref{eq_star4},~\Ref{eq_star4_prime},~\Ref{eq_star5} to the first, second and third summands of $L_{21}(xy,z)$, respectively, we obtain 
$$\begin{array}{rl}
L_{21}(xy,z) & = L_{111}(x^2,y^2,z) + L_{21}(x,y^2z) + L_{21}(y,x^2z) + L_{21}(x,zy^2)\\
& + L_{21}(y,zx^2) + L_{21}(y,x^2z) + L_{21}(y,zx^2) + L_{21}(x,yzy).\\
\end{array}$$
By equalities~\Ref{eq_L111} and~\Ref{eq_L21_L12}, $L_{21}(xy,z)=f(x,y,z)$ for
$$\begin{array}{rl}
f(x,y,z) & = L_{12}(x,y^2z) + L_{12}(y,x^2z) + L_{12}(x,zy^2) + L_{12}(y,zx^2) \\
& + L_{12}(y,x^2z) + L_{12}(y,zx^2) + L_{12}(x,yzy).\\
\end{array}$$
 
Consider $a,b$ from the formulation of the lemma. Let us recall that $\deg(a)>1$. We set $a=xy$ for $x,y\in\X_d$ such that 
\begin{enumerate}
\item[$\bullet$] if $\deg(a)$ is even, then $\deg(x)=\deg(y)=r\geq1$;  

\item[$\bullet$] if $\deg(a)$ is odd, then $\deg(x)=r+1$ and $\deg(y)=r\geq1$.
\end{enumerate}
We have $L_{21}(a,b)=f(x,y,b)$. Note that $f(x,y,b)$ is a sum of words of degrees $k_1=\deg(x)+4\deg(y)+2\deg(b)$ and $k_2=4\deg(x)+\deg(y)+2\deg(b)$. For $k_0=\deg(L_{21}(a,b))=2\deg(x)+2\deg(y)+\deg(b)$ we claim that
$$k_0<k_1 \text{ and } k_0<k_2.$$
For short, we write $s$ for $\deg(b)$. In the case of even $\deg(a)$ we have $k_0=4r+s$, $k_1=k_2=5r+2s$ and in the case of odd $\deg(a)$ we have $k_0=4r+s+2$, $k_1=5r+2s+1$, $k_2=5r+2s+4$. Hence, the claim is proven and $L_{21}(a,b)\approx0$.  
\end{proof}

%---L3.6-----------------------------------------------------------------------------
\begin{lemma}\label{lemma_n3_3}
We have $uL_{21}(a,b)v\approx0$ in $N_{3,d}$ for all $a,b\in\X_d$, $u,v\in\X_d^{\#}$ with $\deg(ua^2bv)\geq4$.
\end{lemma}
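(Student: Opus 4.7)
The plan is to split on $\deg(a)$. If $\deg(a)\geq 2$, then $\deg(ua^2bv)\geq 4$ is automatic, Lemma~\ref{lemma_n3_2} gives $L_{21}(a,b)\approx 0$ in $N_{3,d}$, and Remark~\ref{remark_approx} propagates this to $uL_{21}(a,b)v\approx 0$. So this case is immediate.

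The substantive case is $\deg(a)=1$, say $a=x$ for some letter $x$. Under this assumption the hypothesis $\deg(ua^2bv)\geq 4$ simplifies to $\deg(ubv)\geq 2$. My plan has two steps. First, I would iterate~\Ref{eq_star1} and~\Ref{eq_star2} (each multiplied by an appropriate word on the opposite side) to absorb the outer factors into the inner argument of $L_{21}$, obtaining
$$uL_{21}(x,b)v = L_{21}(x,ubv)\text{ in }N_{3,d}.$$
This rewriting preserves the total degree. Second, I would apply~\Ref{eq_L21_L12}, which over $\FF_2$ reads $L_{21}(\alpha,\beta)=L_{12}(\alpha,\beta)=L_{21}(\beta,\alpha)$ in $N_{3,d}$, to swap the two arguments:
$$L_{21}(x,ubv) = L_{21}(ubv,x) = (ubv)^2x + (ubv)x(ubv) + x(ubv)^2 \text{ in }N_{3,d}.$$
Each of the three monomials on the right has degree $2\deg(ubv)+1$, while $\deg(uL_{21}(x,b)v)=\deg(ubv)+2$. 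The inequality $2\deg(ubv)+1>\deg(ubv)+2$ is equivalent to $\deg(ubv)\geq 2$, which is precisely our hypothesis, so by the definition of $\approx$ we conclude $uL_{21}(x,b)v\approx 0$.

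The main obstacle I anticipate is the subtlety highlighted in Remark~\ref{remark_approx}: equality in $N_{3,d}$ does \emph{not} in general preserve the relation $\approx 0$, so one must check that the symmetrizing rewrite $L_{21}(x,ubv)=L_{21}(ubv,x)$ genuinely \emph{strictly} increases the degree of every monomial produced. This succeeds here exactly because $L_{21}$ is quadratic in its first argument: replacing a letter $x$ in that slot by a word of degree $\geq 2$ is what pays the required gain in total degree, and this is also why the borderline case $\deg(ubv)=1$ is (correctly) excluded by the degree hypothesis.
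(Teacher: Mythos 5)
Your proposal is correct and follows essentially the same route as the paper: Lemma~\ref{lemma_n3_2} disposes of the case $\deg(a)\geq 2$, the degree-preserving identities~\Ref{eq_star1} and~\Ref{eq_star2} absorb $u$ and $v$ into the second argument, and relation~\Ref{eq_L21_L12} swaps the arguments to raise the degree when $a$ is a letter and $\deg(ubv)\geq 2$. Your explicit check that the absorption steps preserve degree (so that $\approx 0$ transfers back, despite the caveat in Remark~\ref{remark_approx}) is a point the paper leaves implicit, and it is handled correctly.
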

\begin{proof}
Assume that $u=v=1$. In case $\deg(a)>1$ the required statement follows from Lemma~\ref{lemma_n3_2}. Let $\deg(a)=1$ and $\deg(b)\geq2$. Then equality~\Ref{eq_L21_L12} gives us the required stetement. 

Assume that $\deg(u)>0$. Since the degree of the right hand side of formula~\Ref{eq_star1} is equal to the degree of its left hand side, formula~\Ref{eq_star1} completes the proof. 
\end{proof}

%\medskip

Now we can proof Theorem~\ref{theo_n3}.

\begin{proof}
Denote by $V_{r}^{\FF}$ the $\FF$-span of all words of $\X_d$ of degree $r$. We write $U_r^{\FF}$ for the image of $V_r^{\FF}$ in $N_{3,d}^{\FF}$.
Note that if a summand of relation~\Ref{eq_L21_L12} has degree three, then the rest of summands of this relation have degree three. Therefore, $N_{3,d}^{\FF}=U_3^{\FF}\oplus W^{\FF}$ for $W^{\FF}=\sum_{r>3} U_r^{\FF}$. 

Let $B=\{b_1,\ldots,b_s\}$ be an $\LL$-basis for $N_{3,d}^{\LL}$ consisting of words. Denote by $B_r$ the set of all $b\in B$ of degree $r$. We claim that 
\begin{eq}\label{claim_n3}
\text{every } w\in\X_d \text{ with } \deg(w)\geq4 \text{ belongs to } \FF\text{-span of } \sqcup_{r\geq4} B_r \text{ in } N_{3,d}^{\FF}. 
\end{eq}%
We set $\deg(w)=r\geq 4$. Since $N_{3,d}^{\LL}$ is homogeneous with respect to the degree, we have $w=\sum_i\be_i b_i$ in $N_{3,d}^{\LL}$ for some $\be_i\in \FF$ and $b_i\in B_r$ (see also Remarks~\ref{remark0} and~\ref{remark_important}). Thus relation~\Ref{eq_L111} together with Lemma~\ref{lemma_n3_3} imply that $w-\sum_i\be_i b_i\approx0$ in $N_{3,d}^{\FF}$. Thus, $w=\sum_i\be_i b_i + \sum_j\ga_j w_j$ for some $\ga_j\in\FF$ and $w_j\in\X_d$ with $\deg(w_j)>r$. Then we consider $w_j$ and so on. Since $N_{3,d}^{\FF}$ is nilpotent, this process will stop at some step. The claim is proven. Similarly to part~(b) of Remark~\ref{remark_important}, we can see that the set $\sqcup_{r\geq4} B_r$ is linearly independent in $N_{3,d}^{\FF}$. Thus, $\sqcup_{r\geq4} B_r$ is a basis for $W^{\FF}$. 

Denote by $B_{111}$ the set of all $x_i x_j x_k$ with pairwise different $i,j,k\in\{1,\ldots,d\}$ such that $(i,j,k)$ is not ordered and denote by $B_{21}$ the set of words $x_i^2x_j$ with $1\leq i\neq j\leq d$.  It is not difficult to see that the set $B_{111}\sqcup B_{21}$ is a basis for $U_3^{\LL}$; and $B_{111}\sqcup B_{21}$ together with words $x_i x_j x_i$, where $1\leq i<j\leq d$, is a basis for $U_3^{\FF}$. The theorem is proven. 
\end{proof}

The next remark follows from the proof of Theorem~\ref{theo_n3}.

%---R3.7-----------------------------------------------------------------------
\begin{remark}\label{remark_n3}
Let $\FF=\FF_2$ and $\LL$ be an infinite field of characteristic two. If we add elements $x_i x_j x_i$, $1\leq i<j\leq d$, to the basis for $N_{3,d}^{\LL}$ from Theorem~2 of~\cite{Lopatin_Comm2}, then we obtain a basis for $N_{3,d}^{\FF}$. 
\end{remark}

%=======================================================================================
%=======================================================================================
%---Sec4--------------------------------------------------------------------------------
\section{The case of $\#\FF=n-1$}\label{section_sec4}

Let us remark that the case of $\#\FF\geq n$ is considered in Corollary~\ref{cor1}. In this section we investigate the case of field $\FF$ with $n-1$ elements.

%---T4.1-----------------------------------------------------------------------------
\begin{theo}\label{theo_sec4}
Let $\#\FF=n-1$. Then for every infinite field $\LL$ with $\Char{\LL}=\Char{\FF}$ we have  $C_{n,d}^{\LL} \leq C_{n,d}^{\FF}\leq C_{n,d}^{\LL} + 1$. 
\end{theo}
\bigskip

We split the proof of the theorem into several lemmas. The following lemma is a modification of Lemma~1 of Chapter~6 of~\cite{Shestakov_book}.

%---L4.2----------------------------------------------------------
\begin{lemma}\label{lemma_sec4_1}
For every field $\FF$ we have 
$$n\, xy^n  = L_{n-1,1}(y,xy) - (n-1)\, L_{n-1,1}(y,yx) + L_{n-2,1,1}(y,x,y^2)$$ 
in $\FF\X$.
\end{lemma}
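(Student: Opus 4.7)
The plan is a direct computation in the free algebra $\FF\X$. Since each partial linearization $L_{\un{\theta}}$ is, by definition, an explicit sum of words, the three terms on the right-hand side can be written out and compared with $n\,xy^n$ term by term; no appeal to relations in $N_{n,d}$ is needed.

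First I would compute the two bilinear pieces. Placing the length-two ``letter'' in slot $i$ of a string of $y$'s (with $i = 0, \ldots, n-1$), one immediately obtains
\begin{align*}
L_{n-1,1}(y, xy) &= \sum_{i=0}^{n-1} y^i (xy)\, y^{n-1-i} = \sum_{i=0}^{n-1} y^i x y^{n-i}, \\
L_{n-1,1}(y, yx) &= \sum_{i=0}^{n-1} y^i (yx)\, y^{n-1-i} = \sum_{j=1}^{n} y^j x y^{n-j}.
\end{align*}
Next I would compute $L_{n-2,1,1}(y, x, y^2)$ by summing over ordered pairs of distinct ``slot positions'' for $x$ and for $y^2$ among the $n$ total slots. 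Splitting into the two sub-cases ``the $y^2$-slot lies to the right of the $x$-slot'' and ``it lies to the left'' reduces the sum to words of the form $y^k x y^{n-k}$, and a direct count of the number of $(p,q)$ producing each such word yields coefficient $n-1$ when $k \in \{0, n\}$ and coefficient $(n-k-1) + (k-1) = n-2$ for $k = 1, \ldots, n-1$. Hence
\begin{equation*}
L_{n-2,1,1}(y, x, y^2) = (n-1)(xy^n + y^n x) + (n-2)\sum_{k=1}^{n-1} y^k x y^{n-k}.
\end{equation*}

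Finally I would assemble the right-hand side and watch the cancellations: the coefficient of $xy^n$ is $1 + (n-1) = n$; the coefficient of $y^n x$ is $-(n-1) + (n-1) = 0$; and the coefficient of each intermediate $y^k x y^{n-k}$ (for $1 \le k \le n-1$) is $1 - (n-1) + (n-2) = 0$. Everything collapses to $n\,xy^n$, as claimed. The argument is purely mechanical; the only point deserving care is the enumeration inside $L_{n-2,1,1}(y, x, y^2)$, where the symbol $y^2$ occupies a single linearization slot but contributes two letters to the resulting word, so the count depends on whether the $y^2$-slot sits to the left or to the right of the $x$-slot.
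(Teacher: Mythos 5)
Your proof is correct, and it takes a genuinely different route from the paper's. You expand all three terms of the right-hand side explicitly as sums of words $y^k x y^{n-k}$ and verify the identity by comparing integer coefficients: the only delicate point is the enumeration inside $L_{n-2,1,1}(y,x,y^2)$, and your count (coefficient $n-1$ at $k\in\{0,n\}$, coefficient $(n-k-1)+(k-1)=n-2$ for $1\le k\le n-1$, with the boundary cases $k=1$ and $k=n-1$ correctly absorbed by the vanishing of one of the two summands) is right, so the assembled coefficients $1+(n-1)=n$, $-(n-1)+(n-1)=0$ and $1-(n-1)+(n-2)=0$ do give exactly $n\,xy^n$. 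The paper instead starts from the identity $L_{n-1,1}(x,xy)=xL_{n-1,1}(x,y)$, fully linearizes it by substituting $x\to x_1+\cdots+x_n$ and extracting the multilinear component, specializes $x_2,\dots,x_n\to y$, and then divides the resulting $\ZZ\X$-identity by $(n-1)!$ before descending to $\FF$. Your computation is more elementary and entirely self-contained (all coefficients are manifestly integers, so no detour through $\ZZ\X$ and no division by factorials is needed), at the price of a hand count of word multiplicities; the paper's derivation is less error-prone on the combinatorics but requires the extra justification that the factor $(n-1)!$ can be cancelled over $\ZZ$ before reducing modulo the characteristic. Both establish the identity in $\FF\X$ for every field, as required.
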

\begin{proof} For short, we write $x$ for $x_1$ and $y$ for $x_{n+1}$. In $\ZZ\X$ holds 
\begin{eq}\label{eq_sec4_1}
\begin{array}{rl}
L_{n-1,1}(x,xy) & = \sum_{i=0}^{n-1} x^i xy x^{n-i-1} \\
&=  x\sum_{i=0}^{n-1} x^i y x^{n-i-1} = x L_{n-1,1}(x,y).\\
\end{array}
\end{eq}%
Note that two elements of $\ZZ\X$ are equal to each other if and only if its homogeneous components with respect to the multidegree are equal to each other. Thus, making the  substitution $x\to x_1+\cdots+x_n$ in the above equality and taking the homogeneous component of multidegree $1^{n+1}$ (i.e. of degree one in each of letters $x_1,\ldots,x_n,y$), we obtain   
$$\sum_{i=1}^n L_{1^n}(x_1,\ldots,x_{i-1},x_{i+1},\ldots,x_n,x_i y) = 
\sum_{i=1}^n x_i L_{1^n}(x_1,\ldots,x_{i-1},x_{i+1},\ldots,x_n,y).
$$%
We make substitutions $x_2\to y,\ldots,x_n\to y$ and recall that $x_1=x$. Thus in $\ZZ\X$ we have
\begin{eq}\label{eq_sec4_2}
\begin{array}{rl}
& L_{1^n}(\underbrace{y,\ldots,y}_{n-1},xy) +  (n-1)\, L_{1^n}(x,\underbrace{y,\ldots,y}_{n-2},y^2) \\
=&x L_{1^n}(y,\ldots,y,y) +  (n-1)\, y L_{1^n}(x,\underbrace{y,\ldots,y}_{n-2},y).\\
\end{array}
\end{eq}%
By~\Ref{eq_sec4_1}, the second summand of the right hand side of~\Ref{eq_sec4_2} is equal to $(n-2)!(n-1)^2\,L_{n-1,1}(y,yx)$. Hence
\begin{eq}\label{eq_sec4_3}
\begin{array}{rl}
& (n-1)!\,L_{n-1,1}(y,xy) +  (n-1)!\, L_{n-2,1,1}(y,x,y^2) \\
=&n!\,x y^n +  (n-2)!(n-1)^2\,L_{n-1,1}(y,yx)\\
\end{array}
\end{eq}%
in $\ZZ\X$. Obviously, we can divide~\Ref{eq_sec4_3} by $(n-1)!$. As the result, we obtain that the required equality hold over $\ZZ$ and, therefore, it holds over $\FF$.
\end{proof}

In the rest of this section we assume that $\#\FF=n-1$ and $n\geq3$. By Theorem~\ref{theo1} (see also Example~\ref{ex1}), in $N_{n,d}$ we have
\begin{eq}\label{eq_sec4_4}
L_{1,n-1}(x,y) + L_{n-1,1}(x,y)=0 \text{ and }
\end{eq}
\vspace{-0.5cm}
\begin{eq}\label{eq_sec4_4prime}
L_{\un{\theta}}(x_1,\ldots,x_r)=0, \text{ where } \#\un{\theta}=r,\; |\un{\theta}|=n \text{ and } \un{\theta}\not\in\{(1,n-1),(n-1,1)\}. 
\end{eq}

%---L4.3--------------------------------------------------------
\begin{lemma}\label{lemma_sec4_2}
In case $\#\FF=n-1$ the following equalities hold in $N_{n,d}$:
\begin{enumerate}
\item[(a)] $x L_{n-1,1}(y,z) = L_{n-1,1}(y,xz)$; 

\item[(b)] $L_{n-1,1}(x,y)z = L_{n-1,1}(x,yz)$; 

\item[(c)] $L_{n-1,1}(x,yz) = -L_{n-1,1}(z,yx)$.
\end{enumerate}
\end{lemma}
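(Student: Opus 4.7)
The plan is threefold: establish~(a) by linearizing Lemma~\ref{lemma_sec4_1}, deduce~(b) from~(a) via the anti-automorphism~$\ast$ of Remark~\ref{remark_inversion}, and reduce~(c) to relation~\Ref{eq_sec4_4} using~(a) and~(b).

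For~(a), start from the $\FF\X$-identity of Lemma~\ref{lemma_sec4_1}, substitute $y\mapsto y+z$ with $z$ a fresh letter, and extract the multi-homogeneous component of multidegree $(n-1,1,1)$ in $(y,z,x)$. The left side yields $n\,x\,L_{n-1,1}(y,z)$, since the $(n-1,1)$-part of $(y+z)^n$ is $L_{n-1,1}(y,z)$ by definition of partial linearization. On the right, using $x(y+z)=xy+xz$ and $(y+z)x=yx+zx$ together with linearity of $L_{n-1,1}$ in its second argument, a direct positional count identifies the $(n-1,1,1)$-component of $L_{n-1,1}(y+z,x(y+z))$ with $L_{n-2,1,1}(y,z,xy)+L_{n-1,1}(y,xz)$ and, analogously, that of $L_{n-1,1}(y+z,(y+z)x)$ with $L_{n-2,1,1}(y,z,yx)+L_{n-1,1}(y,zx)$. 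The remaining term $L_{n-2,1,1}(y+z,x,(y+z)^2)$ is a value of the partial linearization $L_{n-2,1,1}$ in $\FF\X_d$, so it vanishes in $N_{n,d}^{\FF}$ by~\Ref{eq_sec4_4prime}, and so does its $(n-1,1,1)$-component, since $N_{n,d}^{\FF}$ is multigraded. Since $\#\FF=n-1=p^k$ with $p=\Char\FF$, we have $n\equiv 1$ and $n-1\equiv 0\pmod{p}$; every $L_{n-2,1,1}(\cdot,\cdot,\cdot)$ vanishes in $N_{n,d}^{\FF}$ by~\Ref{eq_sec4_4prime}, and the coefficient $(n-1)$ is zero in $\FF$, so the $(n-1)\,L_{n-1,1}(y,zx)$ term drops. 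What survives is precisely $x\,L_{n-1,1}(y,z)=L_{n-1,1}(y,xz)$.

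For~(b), apply $\ast$ to~(a). Reversing $y^i z\, y^{n-1-i}$ yields $y^{n-1-i} z\, y^i$, so $L_{n-1,1}(y,z)$ is $\ast$-invariant; hence~(a) inverts to $L_{n-1,1}(y,z)\,x=L_{n-1,1}(y,zx)$, which is~(b) after renaming. For~(c), apply~(a) with the renamings $(x,y,z)\mapsto(y,x,z)$ and $(x,y,z)\mapsto(y,z,x)$ to rewrite $L_{n-1,1}(x,yz)=y\,L_{n-1,1}(x,z)$ and $L_{n-1,1}(z,yx)=y\,L_{n-1,1}(z,x)$. Thus~(c) becomes $y\bigl[L_{n-1,1}(x,z)+L_{n-1,1}(z,x)\bigr]=0$. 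Since $L_{\un\theta}$ is invariant under simultaneous permutation of indices and arguments, $L_{n-1,1}(z,x)=L_{1,n-1}(x,z)$, so the bracket coincides with $F_{n-1,1}(x,z)$ and is zero by~\Ref{eq_sec4_4}.

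The principal obstacle is the combinatorial bookkeeping in the proof of~(a): showing that the $(n-1,1,1)$-component of $L_{n-1,1}(y+z,x(y+z))$ is exactly $L_{n-2,1,1}(y,z,xy)+L_{n-1,1}(y,xz)$. Writing $L_{n-1,1}(u,v)=\sum_i u^i v\,u^{n-1-i}$ with $u=y+z$ and $v\in\{xy,xz\}$, the case $v=xy$ forces exactly one $z$ to appear among the $n-1$ surrounding $(y+z)$'s and, summing over positions of that $z$ and of $xy$, reproduces $L_{n-2,1,1}(y,z,xy)$; the case $v=xz$ forces the surrounding $(y+z)$'s all to be $y$ and yields $L_{n-1,1}(y,xz)$. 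Once this identification is in hand, the rest is formal manipulation using~\Ref{eq_sec4_4prime} and the divisibility $p\mid n-1$.
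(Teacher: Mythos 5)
Your overall route coincides with the paper's: part~(a) by substituting $y\to y+z$ in the identity of Lemma~\ref{lemma_sec4_1} and extracting the multidegree component with $n-1$ copies of $y$ and one each of $z$ and $x$; part~(b) by the inversion $\ast$ of Remark~\ref{remark_inversion}; part~(c) by combining~(a) with relation~\Ref{eq_sec4_4}. Parts~(b) and~(c) are fine, and your bookkeeping for the components of $L_{n-1,1}(y+z,x(y+z))$ and $L_{n-1,1}(y+z,(y+z)x)$ is correct.

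The one step I cannot accept as written is the disposal of the term $L_{n-2,1,1}(y+z,x,(y+z)^2)$: you argue that the whole expression vanishes in $N_{n,d}^{\FF}$ by~\Ref{eq_sec4_4prime} and that therefore its $(n-1,1,1)$-component vanishes too, ``since $N_{n,d}^{\FF}$ is multigraded.'' Over a finite field $N_{n,d}^{\FF}$ is \emph{not} known to be multigraded, and this is precisely the crux of the whole paper: the defining relation $L_{n-1,1}(a,b)+L_{1,n-1}(a,b)=0$ is not multihomogeneous when $\mdeg(a)\neq\mdeg(b)$, its components need not vanish separately (compare Theorem~\ref{theo_n3}, where exactly this failure produces $\dim N_{3,d}^{\FF_2}>\dim N_{3,d}^{\LL}$), and if the multigrading were available one would expect $C_{n,d}^{\FF}=C_{n,d}^{\LL}$ outright rather than the bound of Theorem~\ref{theo_sec4}. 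Fortunately the step is easily repaired without that assumption: carry out the extraction of the component entirely inside the free algebra $\FF\X$, which \emph{is} multigraded; the relevant component of $L_{n-2,1,1}(y+z,x,(y+z)^2)$ is then the explicit sum $L_{n-3,1,1,1}(y,z,x,y^2)+L_{n-2,1,1}(y,x,yz)+L_{n-2,1,1}(y,x,zy)$, and each summand is an honest value of a partial linearization $L_{\un{\theta}}$ with $\un{\theta}\notin\{(1,n-1),(n-1,1)\}$, hence zero in $N_{n,d}^{\FF}$ by~\Ref{eq_sec4_4prime}. (Note also that this component contains an $L_{n-3,1,1,1}$ term, not only $L_{n-2,1,1}$ terms, so your blanket appeal to ``every $L_{n-2,1,1}$ vanishes'' would not cover it even if the grading argument were valid.) With this correction your argument matches the paper's proof.
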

\begin{proof}We can assume that $x=x_1$, $y=x_2$, and $z=x_3$. By Lemma~\ref{lemma_sec4_1}, 
$$xy^n  = L_{n-1,1}(y,xy) + L_{n-2,1,1}(y,x,y^2)$$ 
in $\FF\X$. Note that two elements of $\FF\X$ are equal to each other if and only if its homogeneous components with respect to the multidegree are equal to each other. Thus, making the  substitution $y\to y+z$ in the above equality and taking the homogeneous component of multidegree $(1,n-1,1)$ (i.e. of degree one in $x$, $z$ and degree $n-1$ in $y$), we obtain
$$
\begin{array}{rl}
x L_{n-1,1}(y,z) & = L_{n-1,1}(y,xz) + L_{n-2,1,1}(y,z,xy) \\
& + L_{n-2,1,1}(y,x,yz)+ L_{n-2,1,1}(y,x,zy) + L_{n-3,1,1,1}(y,z,x,y^2)\\
\end{array}
$$%
in $\FF\X$. Applying~\Ref{eq_sec4_4prime}, we complete the proof of part~(a) of the lemma. 
Part~(b) follows from Remark~\ref{remark_inversion}.

By~\Ref{eq_sec4_4}, $x L_{n-1,1}(y,z) + x L_{n-1,1}(z,y)=0$ in $N_{n,d}$. Thus part~(a) of the lemma implies part~(c).
\end{proof}

%---L4.4-----------------------------------------------------------------------------
\begin{lemma}\label{lemma_sec4_3}
In case $\#\FF=n-1$ we have $uL_{n-1,1}(a,b)v\approx0$ in $N_{n,d}$ for all $a,b\in\X_d$, $u,v\in\X_d^{\#}$ with $\deg(ubv)\geq2$.
\end{lemma}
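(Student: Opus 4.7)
The plan is to absorb $u$ and $v$ into the second slot of $L_{n-1,1}$ via Lemma~\ref{lemma_sec4_2}(a)--(b), and then use the ``mixed'' generating relation~\Ref{eq_sec4_4} specific to $\#\FF=n-1$ to trade $L_{n-1,1}$ for $L_{1,n-1}$, which automatically has strictly larger degree. Iterating parts~(a) and~(b) of Lemma~\ref{lemma_sec4_2} one letter at a time, I obtain
$$u\,L_{n-1,1}(a,b)\,v=L_{n-1,1}(a,ubv)\quad\text{in }N_{n,d}.$$
Both sides have the same degree in $\FF\X_d$, so it suffices to prove $L_{n-1,1}(a,c)\approx 0$ for the word $c=ubv\in\X_d$, which satisfies $\deg(c)\geq 2$ by hypothesis.

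Next, I would split $c=c_1c_2$ by peeling off the last letter $c_2$, so $c_1\in\X_d$ has degree $\deg(c)-1\geq 1$. Lemma~\ref{lemma_sec4_2}(c) gives $L_{n-1,1}(a,c)=-L_{n-1,1}(c_2,c_1a)$ in $N_{n,d}$, and substituting the words $c_2$ and $c_1a$ into the defining relation~\Ref{eq_sec4_4} (legitimate because the ideal of relations in $N_{n,d}$ is a T-ideal, being generated by $\{x^n:x\in\FF\X_d\}$) converts this to
$$L_{n-1,1}(a,c)=L_{1,n-1}(c_2,c_1a)\quad\text{in }N_{n,d}.$$

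The conclusion then follows from a short degree count. The right-hand side is homogeneous of degree $1+(n-1)\bigl(\deg(c)-1+\deg(a)\bigr)$, while the left-hand side has degree $(n-1)\deg(a)+\deg(c)$; their difference equals $(n-2)\bigl(\deg(c)-1\bigr)\geq n-2\geq 1$ for $n\geq 3$ and $\deg(c)\geq 2$. Hence every word in $L_{1,n-1}(c_2,c_1a)$ has degree strictly greater than $\deg\bigl(L_{n-1,1}(a,c)\bigr)$, so $L_{n-1,1}(a,c)\approx 0$ and consequently $u\,L_{n-1,1}(a,b)\,v\approx 0$. I do not foresee any real obstacle: the argument reduces to a one-step calculation once Lemma~\ref{lemma_sec4_2} is in hand, and the crucial conceptual ingredient is simply that the surviving ``mixed'' relation $L_{1,n-1}+L_{n-1,1}=0$ allows one to replace $L_{n-1,1}(a,c)$ by a strictly higher-degree partner whenever $c$ has length at least two.
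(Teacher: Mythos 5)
Your proof is correct and follows essentially the same route as the paper: absorb $u$ and $v$ into the second argument via the degree-homogeneous identities of Lemma~\ref{lemma_sec4_2}(a)--(b), peel off the last letter, apply part~(c) together with relation~\Ref{eq_sec4_4} to rewrite the element as an $L$-term of strictly larger degree, and conclude. The only cosmetic difference is that the paper reduces to $\deg(b)\geq 2$ and writes the final term as $L_{n-1,1}(ca,x)$ via the symmetry of $L$, whereas you keep it as $L_{1,n-1}(c_2,c_1a)$ and do the explicit degree count.
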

\begin{proof}
We work in $N_{n,d}$. Note that the equalities from parts~(a) and~(b) of Lemma~\ref{lemma_sec4_2} are homogeneous with respect to the degree. Applying these equalities, without loss of generality we can assume that $\deg(b)\geq2$. We set $f=L_{n-1,1}(a,b)$ and $b=cx$ for a letter $x$ and a word $c\in\X_d$. Part~(c) of Lemma~\ref{lemma_sec4_2} and~\Ref{eq_sec4_4} imply that $f=-L_{n-1,1}(x,ca) = L_{n-1,1}(ca,x)$. The inequality $\deg(L_{n-1,1}(ca,x))>\deg(f)$ completes the proof. 
\end{proof}

Now we can proof Theorem~\ref{theo_sec4}.

\begin{proof} We set $p=\Char{\FF}=\Char{\LL}$. Consider a $k>C=C_{n,d}^{\LL}$. Let $u$ be a word of $\X_d$ with $\deg(u)=k$.  We set $u=vx$ for a letter $x$ and a word $v\in\X_d$ with $\deg(v)\geq C$. Since the ideal of relations for $N_{n,d}^{\LL}$ is generated by elements with coefficients $0$ and $1$ (see Remark~\ref{remark0}), we have 
$$v=\sum_i \al_i L_{\un{\theta}_i}(\un{a}_i)$$
in $\FF\X$ for $\al_i\in\FF_p$, $|\un{\theta}_i|=n$, and $\un{a}_i=(a_{i1},\ldots,a_{ir})$, where $r=\#\un{\theta}_i$ and $a_{i1},\ldots,a_{ir}\in\X_d$. Lemma~\ref{lemma_sec4_3} together with relation~\Ref{eq_sec4_4prime} imply $u=vx\approx 0$ in $N_{n,d}^{\FF}$. Thus if $w=0$ in $N_{n,d}^{\FF}$ for all $w\in\X_d$ with $\deg(w)=k+1$, then $w=0$ in $N_{n,d}^{\FF}$ for all $w\in\X_d$ with $\deg(w)=k$. Hence $C_{n,d}^{\FF}\leq C_{n,d}^{\LL}+1$. The inequality $C_{n,d}^{\LL}\leq C_{n,d}^{\FF}$ follows from part~2 of Corollary~\ref{cor1}.
\end{proof}

Corollary~\ref{cor1}, Theorem~\ref{theo_sec4} together with Theorem~5.1 from~\cite{Lopatin_Nnd} imply the following corollary.

%---C4.5-----------------------------------------------
\begin{cor}\label{cor_n4}
For an arbitrary field $\FF$ of characteristic $p$ and $d\geq2$ we have
\begin{enumerate}
\item[$\bullet$] $C_{4,d}=10$, if $p = 0$;

\item[$\bullet$] $3d < C_{4,d}$, if $p = 2$;

\item[$\bullet$] $3d + 1\leq C_{4,d}\leq 3d + 5$, if $p = 3$;

\item[$\bullet$] $10\leq C_{4,d}\leq 13$, if $p > 3$.
\end{enumerate}
\end{cor}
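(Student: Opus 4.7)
The plan is to combine the infinite-field bounds for $C_{4,d}^{\LL}$ supplied by Theorem~5.1 of~\cite{Lopatin_Nnd} (together with Vaughan--Lee's exact value in characteristic zero) with the three transfer results that relate $C_{n,d}^{\FF}$ to $C_{n,d}^{\LL}$ in the appropriate regime: the consequence of Remarks~\ref{remark0} and~\ref{remark_important} for infinite $\FF$, Corollary~\ref{cor1} parts~2 and~3 for finite $\FF$ with $\#\FF\neq n-1$, and Theorem~\ref{theo_sec4} for $\#\FF=n-1$. Throughout I fix an infinite field $\LL$ of the same characteristic $p$ as $\FF$.

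First, characteristic~$0$ is immediate: every such $\FF$ is infinite, hence $C_{4,d}^{\FF}=C_{4,d}^{\LL}=10$ by Vaughan--Lee. For characteristic $p>3$, the prime field $\FF_p$ already has $p\geq 5>4=n$ elements, so every $\FF$ of characteristic $p>3$ satisfies $\#\FF\geq n$; Corollary~\ref{cor1} part~3 (and, for infinite $\FF$, the remark after Remark~\ref{remark_important}) identifies $C_{4,d}^{\FF}$ with $C_{4,d}^{\LL}$, and Theorem~5.1 of~\cite{Lopatin_Nnd} supplies $10\leq C_{4,d}^{\LL}\leq 13$.

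In characteristic~$3$ I would split on the size of $\FF$. If $\#\FF\geq 4$ (so either $\FF$ is infinite or $\FF=\FF_{3^k}$ with $k\geq 2$), Corollary~\ref{cor1} again gives $C_{4,d}^{\FF}=C_{4,d}^{\LL}$, whereas if $\FF=\FF_3$ then $\#\FF=n-1$ and Theorem~\ref{theo_sec4} costs at most one extra unit in the upper bound. Since Theorem~5.1 of~\cite{Lopatin_Nnd} supplies $3d+1\leq C_{4,d}^{\LL}\leq 3d+4$, merging the two subcases yields $3d+1\leq C_{4,d}^{\FF}\leq 3d+5$. In characteristic~$2$, each $\FF$ either has $\#\FF\geq 4$ (and then $C_{4,d}^{\FF}=C_{4,d}^{\LL}$ by Corollary~\ref{cor1} part~3) or equals $\FF_2$ (and then $C_{4,d}^{\FF_2}\geq C_{4,d}^{\LL}$ by Corollary~\ref{cor1} part~2); in either subcase $C_{4,d}^{\FF}\geq C_{4,d}^{\LL}>3d$, the last inequality being the content of Theorem~5.1 of~\cite{Lopatin_Nnd} in characteristic~$2$.

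The only genuine obstacle is bookkeeping: one must verify that the characteristic cases $\{0,2,3,{>}3\}$, together with the case split $\#\FF\geq n$ versus $\#\FF=n-1$ versus $\#\FF<n-1$, exhaust all fields, and that the unique regime in which the $+1$ of Theorem~\ref{theo_sec4} is spent is $\FF=\FF_3$ with $p=3$; this is precisely why the upper bound widens from $3d+4$ to $3d+5$ in characteristic~$3$ while the characteristic $p>3$ bound stays at~$13$.
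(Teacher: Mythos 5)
Your proposal is correct and is exactly the argument the paper intends: the paper gives no written proof beyond the sentence that Corollary~2.8, Theorem~4.1 and Theorem~5.1 of~\cite{Lopatin_Nnd} imply the statement, and your case split on $\Char\FF$ and on $\#\FF$ relative to $n=4$ (with $\FF_3$ being the unique case where Theorem~4.1 spends the extra $+1$, and $\FF_2$ needing only the lower-bound half of Corollary~2.8) is precisely the intended bookkeeping.
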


%--------------------------------------------------------------------------------
\section*{Acknowledgements}
This paper was supported by FAPESP No.~2011/51047-1. %The first author was also  supported by RFBR 12-01-31016.

%---bib--------------------------------------------------------------------------------


\begin{thebibliography}{99}
%\bibitem{Amitsur_1980} S.A. Amitsur, {\it On the characteristic polynomial of a sum of
%matrices}, Linear and Multilinear Algebra {\bf 8} (1980), 177--182.

%\bibitem{Belov92}A.Ya. Belov, {\it Some estimations for nilpotence of nill-algebras over a field of an arbitrary characteristic and height theorem}, Comm. Algebra {\bf 20} (1992), 2919--2922.

\bibitem{Belov_book05}A. Kanel-Belov, L.H. Rowen, {\it Computational aspects of polynomial identities}, Research Notes in Mathematics {\bf 9}, A.K. Peters, Wellesley, MA, 2005, 378~pp.

\bibitem{Belov11}A.J. Belov, M.I. Kharitonov, {\it Subexponential esimations in Shirshov height theorem}, Mat. Sb. {\bf 203} (2012), no.~4, 81--102 (Russian). 
%see arXiv: 1101.4909v7.

%\bibitem{Domokos02}M. Domokos, {\it Finite generating system of matrix invariants}, Math. Pannon. {\bf 13} (2002), no.~2, 175--181.

\bibitem{DKZ02}M. Domokos, S.G. Kuzmin, A.N. Zubkov, {\it Rings of matrix invariants in positive characteristic}, J. Pure Appl. Algebra {\bf 176} (2002), 61--80.  %

\bibitem{Donkin92a}S. Donkin, {\it Invariants of several matrices}, Invent. Math. {\bf 110} (1992), 389--401.

\bibitem{Drensky_book04}V. Drensky, E. Formanek, {\it Polynomial identity rings}, Advanced Courses in Mathematics, CRM Barcelona, Birkh\"{a}user, Basel-Boston, 2004.

%\bibitem{Dubnov43}J. Dubnov, V. Ivanov, {\it Sur labaissement du degr$\acute{e}$ des polyn$\hat{o}$mes en affineurs}, C.R. (Doklady) Acad. Sci. USSR {\bf 41} (1943), 96--98. 

\bibitem{Eick11} B. Eick, {\it Computing nilpotent quotients of associative algebras and algebras satisfying a polynomial identity}, Int. J. Algebra Comput. {\bf 21} (2011), 1339--1355.

%\bibitem{Jacobson64}N. Jacobson, {\it Structure of rings}, Amer. Math. Soc. Colloq. Publ. {\bf 37} (1964), Appendix~1.

%\bibitem{Havil_book}J. Havil, {\it Gamma: Exploring Euler's constant}, Pricenton, NJ; Pricenton University Press, 2003.

%\bibitem{Higman56}G. Higman, {\it On a conjecture of Nagata}, Math. Proc. Cambridge Philos. Soc. {\bf 52} (1956), 1--4.

%\bibitem{Klein85}A.A. Klein, {\it Indices of nilpotency in a PI-ring}, Arch. Math. (Basel) {\bf 44} (1985), 323--329.

%\bibitem{Klein86}A.A. Klein, {\it Bounds for indices of nilpotency of finitely generated nil rings of finite index}, Quaest. Math. {\bf 9} (1986), 339--347.

%\bibitem{Klein00}A.A. Klein, {\it Bounds for indices of nilpotency and nility}, Arch. Math. (Basel) {\bf 76} (2000), 6--10.

\bibitem{Kuzmin75}E.N. Kuzmin, {\it On the Nagata--Higman theorem}, Mathematical Structures --- Computational Mathematics --- Mathematical Modeling, Proceedings Dedicated to the 60th Birthday of Academician L. Iliev, Sofia, 1975, 101--107 (Russian).

\bibitem{Lopatin_Comm1}A.A. Lopatin, {\it The algebra of invariants of $3\times 3$ matrices over a field of arbitrary characteristic}, Commun. Algebra {\bf 32} (2004), No.~7, 2863--2883.

\bibitem{Lopatin_Comm2}A.A. Lopatin, {\it Relatively free algebras with the identity $x^3=0$}, Commun. Algebra {\bf 33} (2005), No.~10, 3583--3605.

%\bibitem{Lopatin_bplp}A.A. Lopatin, {\it On block partial linearizations of the pfaffian}, Linear Algebra Appl. {\bf 426/1} (2007), 109--129.

%\bibitem{Lopatin_so_inv}A.A. Lopatin, {\it Invariants of quivers under the ac\-tion of classical groups}, J. Algebra {\bf 321} (2009), 1079--1106.

\bibitem{Lopatin_2222}A.A. Lopatin,  {\it Indecomposable invariants of quivers for dimension $(2,\ldots,2)$ and maximal paths}, Commun.  Algebra, {\bf 38} (2010), no.~10, 3539--3555. 

\bibitem{Lopatin_2222_II}A.A. Lopatin,  {\it Indecomposable invariants of quivers for dimension $(2,\ldots,2)$ and maximal paths, II}, Sib. $\rm\grave{E}$lektron.~Mat.~Izv., {\bf 7} (2010), 350--371.

\bibitem{Lopatin_O3}A.A. Lopatin, {\it On minimal generating systems for matrix $O(3)$-invariants}, Linear Mult.~Algebra, {\bf 59} (2011), No.~1, 87--99.

%\bibitem{Lopatin_Orel}A.A. Lopatin, {\it Relations between $O(n)$-invariants of several matrices}, to appear in Algebra Repr.~Theory, arXiv: 0902.4266.

%\bibitem{Lopatin_free_rel}A.A. Lopatin, {\it Free relations for matrix invariants in modular cases}, J.~Pure Appl.~Algebra, {\bf 216} (2012), 427--437.

\bibitem{Lopatin_Nnd}A.A. Lopatin, {\it On the nilpotency degree of the algebra with identity $x^n=0$}, J. Algebra, {\bf 371} (2012), 350--366.

%\bibitem{Nagata53}M. Nagata, {\it On the nilpotency of nil algebras}, J. Math. Soc. Japan {\bf 4} (1953), 296--301.

%\bibitem{Procesi76}C. Procesi, {\it The invariant theory of $n\times n$ matrices}, Adv. Math. {\bf 19} (1976), 306--381.

\bibitem{Razmyslov74}Yu.P. Razmyslov, {\it Trace identities of full matrix algebras over a field of characteristic $0$}, Izv. Akad. Nauk SSSR, Ser. Mat. {\bf 38} (1974), No.~4, 723--756 (Russian). English translation: Math. USSR, Izv. {\bf 8} (1974), No.~4, 727--760.

\bibitem{Shestakov04}I.P. Shestakov, N. Zhukavets, {\it On associative algebras satisfying the identity $x\sp 5=0$}, Algebra Discrete Math. (2004),  No.~1, 112--120.

\bibitem{Shestakov_book}K.A. Zhevlakov, A.M. Slin'ko, I.P. Shestakov, A.I. Shirshov, {\it Rings that are nearly associative}, Nauka, Moscow, 1978, 431 pp. (in Russian). English translation: Pure and Applied Math. {\bf 104}, Academic Press, 1982, 371 pp.

\bibitem{Shirshov57}A.I. Shirshov, {\it On rings with polynomial idetities}, Mat. Sb. {\bf 43} (1957), 277--283. English translation: {\it Twelve Papers in Algebra}, Amer. Math. Soc. Transl. {\bf 119}, 1983, 139pp. 

\bibitem{Vaughan93}M.R. Vaughan--Lee, {\it An algorithm for computing graded algebras}, J. Symbolic Comput. {\bf 16} (1993), 345--354.

\bibitem{Zelmanov07}E. Zelmanov, {\it Some open problems in the theory of infinite dimensional algebras}, J. Korean Math. Soc. {\bf 44} (2007), No.~5, 1185--1195.

%\bibitem{Zubkov96}A.N. Zubkov, {\it On a generalization of the Razmyslov--Procesi theorem}, Algebra and Logic {\bf 35} (1996), No.~4, 241--254.

%\bibitem{Zubkov99}A.N. Zubkov, {\it Invariants of an adjoint action of classical groups}, Algebra and Logic {\bf 38} (1999), No.~5, 299--318.



\end{thebibliography}
\end{document}